\newtheorem{theorem}{Theorem}
\theoremstyle{plain}
\newtheorem{acknowledgement}{Acknowledgement}
\newtheorem{corollary}{Corollary}
\newtheorem{definition}{Definition}
\newtheorem{example}{Example}
\newtheorem{lemma}{Lemma}
\numberwithin{equation}{section}
\begin{document}
\title[Approximate fixed point property in IFNS]{Approximate fixed point
property in intuitionistic fuzzy normed space}
\author{M\"{u}zeyyen Ert\"{u}rk}
\address{Department of Mathematics, Adiyaman University, Adiyaman, Turkey}
\email{merturk3263@gmail.com;merturk@yildiz.edu.tr }
\author{Vatan Karakaya}
\address{Ahi Evran \"{U}niversitesi, Ba\u{g}ba\c{s}{\i} Mahallesi, Sahir Kurutluo\u{g}lu Cad. No:100, 40100 Merkez/Kır\c{s}ehir, Turkey}
\email{vkkaya@yahoo.com;vkkaya@yildiz.edu.tr}
\author{M. Mursaleen}
\address{Department of Mathematics, Aligarh Muslim University, Aligarh
202002, India}
\email{mursaleenm@gmail.com}
\subjclass[2010]{05A15, 05C38, 15A15, 15A18}
\keywords{Approximate fixed point property; intuitionistic fuzzy normed
space; classes of contractions.}

\begin{abstract}
In this paper, we define concept of approximate fixed point property of a
function and a set in intuitionistic fuzzy normed space. Furthermore, we
give intuitionistic fuzzy version of some class of maps used in fixed point
theory and investigate approximate fixed point\ property of these maps.
\end{abstract}

\maketitle

\section{Introduction}

Fuzzy theory was introduced by Zadeh \cite{Zadeh} and was generalized by
Atanassov \cite{Atanassov} as intuititonistic fuzzy theory. This theory is
used many branches of science. Using idea of intuitionistic fuzzy, Park \cite%
{Park} defined intuitionistic fuzzy metric space, later Saadati and Park
\cite{Saadati} introduced intuitionistic fuzzy normed space. Intuitionistic
fuzzy analogous of \ many concepts used in functional analysis were studied
via intuitionistic fuzzy metric and norm (\cite{karakaya}, \cite{mursaleen},%
\cite{karakaya1}, \cite{mohuiddin}, \cite{samanta},\cite{park1}, \cite{lohan}%
). Fixed point theory is one of fields studied intuitionistic fuzzy version.
Some of works related intuitionistic fuzzy fixed point theory can be found
in \cite{alaca}, \cite{noorani}, \cite{samanta2}, \cite{jes}, \cite{muz}.

On the other hand, there are many problems which can be solved with fixed
point theory. But in most cases, it is enough finding an approximate
solution. So, the existence of fixed point may not be necessary for solution
of\ a problem. A reason of being attractive of this approach is addition of
strong conditions for the solution of problem. To find approximate solution
of problem may be easier putting less requirement. Hence, it is natural to
define approximate fixed point of a function and to produce theory related
to this concept. It is meant that $x$ is close to $f\left( x\right) $ with
approximate fixed point of $f\left( x\right) .$ There are several studies
related to this concept( \cite{dey}, \cite{pacurar}, \cite{anoop}, \cite%
{berinde}, \cite{eva}, \cite{rech}).

In this study, we define and study the concept of approximate fixed point
property of a function and a set which is used fixed point theory in
intuitionistic fuzzy normed space by inspiring studies of Berinde \cite%
{berinde} and Anoop \cite{anoop}. We give examples related this concept in
intuitionistic fuzzy normed space.

Firstly, we mention some concept used in our article.

\begin{definition}[see \protect\cite{schw}]
A binary operation $\ast :$ $\left[ 0,1\right] \times \left[ 0,1\right] $ is
a continuous $t$-norm if it satisfies the following \ conditions: (i) $\ast $
is associative and commutative; (ii) $\ast $ is continuous; (iii) $a\ast 1=a$
for all a $\in $ $\left[ 0,1\right] $; (iv) $a\ast b\leq c\ast d$ whenever $%
a\leq c$ and $b\leq d$ for each $a,b,c,d\in \left[ 0,1\right] $.
\end{definition}

\begin{definition}[see \protect\cite{schw}]
A binary operation $\diamond :\left[ 0,1\right] \times \left[ 0,1\right] $
is a continuous $t$-conorm if it satisfies the following \ conditions: (i) $%
\diamond $ is associative and commutative; (ii) $\diamond $ is continuous;
(iii) $a\diamond 0=a$ for all $a\in $ $\left[ 0,1\right] $; (iv) $a\diamond
b\leq c\diamond d$ whenever $a\leq c$ and $b\leq d$ for each $a,b,c,d\in %
\left[ 0,1\right] $.
\end{definition}

\begin{definition}[see \protect\cite{Saadati}]
Let $\ast $\ be a continuous $t$-norm, $\diamond $ be a continuous $t$%
-conorm and $X$\ be a linear space over the field IF($%
\mathbb{R}
$ or $%
\mathbb{C}
$). If $\mu $ and $\nu $\ are fuzzy sets on $X\times \left( 0,\infty \right)
$ satisfying the following conditions, the five-tuple $\left( X,\mu ,\nu
,\ast ,\diamond \right) $ is said to be an intuitionistic fuzzy normed space
and $\left( \mu ,\nu \right) $ is called an intuitionistic fuzzy norm. For
every $x,y\in X$\ and $s,t>0$,

(i) $\mu \left( x,t\right) +\nu \left( x,t\right) \leq 1$,

(ii) $\mu \left( x,t\right) >0$,

(iii) $\mu \left( x,t\right) =1\Longleftrightarrow x=0$,

(iv) $\mu \left( ax,t\right) =$ $\mu \left( x,\frac{t}{\left\vert
a\right\vert }\right) $\ for each $a\neq 0$,

(v) $\mu \left( x,t\right) \ast $ $\mu \left( y,s\right) \leq $ $\mu \left(
x+y,t+s\right) $

(vi) $\mu \left( x,.\right) :\left( 0,\infty \right) \rightarrow \left[ 0,1%
\right] $\ is continuous,

(vii) $\underset{t\rightarrow \infty }{\lim }\mu \left( x,t\right) =1$ and $%
\underset{t\rightarrow 0}{\lim }\mu \left( x,t\right) =0,$

(viii) $\nu \left( x,t\right) <1$,$\nu $

(ix) $\nu \left( x,t\right) =0\Longleftrightarrow x=0$,

(x) $\nu \left( ax,t\right) =$ $\nu \left( x,\frac{t}{\left\vert
a\right\vert }\right) $\ for each $a\neq 0$,

(xi) $\nu \left( x,t\right) \diamond $ $\nu \left( y,s\right) \geq $ $\nu
\left( x+y,t+s\right) $,

(xii)$\nu \left( x,.\right) :\left( 0,\infty \right) \rightarrow \left[ 0,1%
\right] $ is continuous,

(xiii) $\underset{t\rightarrow \infty }{\lim }\nu \left( x,t\right) =0$ and $%
\underset{t\rightarrow 0}{\lim }\nu \left( x,t\right) =1,$
\end{definition}

we further assume that $\left( X,\mu ,\nu ,\ast ,\diamond \right) $
satisfies the following axiom:

\textit{(xiv)} $\ \ \ \ \ \left.
\begin{array}{c}
a\diamond a=a \\
a\ast a=a%
\end{array}%
\right\} $ for all $a\in \left[ 0,1\right] .$

We use IFNS instead of intuitionistic fuzzy normed space for the sake of
abbreviation.

\begin{lemma}[see \protect\cite{Saadati}]
Let $\left( \mu ,\nu \right) $ be intuitionistic fuzzy norm. For any $t>0,$%
the following hold:
\end{lemma}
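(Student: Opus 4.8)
The plan is to establish the two monotonicity assertions that one expects from such a lemma---that $\mu(x,\cdot)$ is non-decreasing and $\nu(x,\cdot)$ is non-increasing on $\left( 0,\infty \right) $---by exploiting the triangle-type inequalities (v) and (xi) together with the behaviour of the norm at the zero vector. The key observation is that the identity elements of the $t$-norm and $t$-conorm coincide with the values of $\mu $ and $\nu $ at $x=0$, so that the auxiliary vector introduced through the triangle inequality drops out cleanly. Accordingly, I would fix $0<t_{1}<t_{2}$ and set $s=t_{2}-t_{1}>0$, reducing everything to a single substitution $y=0$.

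For the first coordinate I would apply (v) to the pair of vectors $x$ and $0$ with the times $t_{1}$ and $s$, obtaining $\mu \left( x,t_{1}\right) \ast \mu \left( 0,s\right) \leq \mu \left( x+0,t_{1}+s\right) =\mu \left( x,t_{2}\right) $. Condition (iii) gives $\mu \left( 0,s\right) =1$, and the defining property $a\ast 1=a$ of the $t$-norm collapses the left-hand side to $\mu \left( x,t_{1}\right) $. This yields $\mu \left( x,t_{1}\right) \leq \mu \left( x,t_{2}\right) $, i.e.\ $\mu \left( x,\cdot \right) $ is non-decreasing in its time argument.

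The second coordinate is handled symmetrically, using the $t$-conorm inequality (xi) in place of (v). With the same decomposition $t_{2}=t_{1}+s$, applying (xi) to $x$ and $0$ gives $\nu \left( x,t_{1}\right) \diamond \nu \left( 0,s\right) \geq \nu \left( x,t_{2}\right) $; here condition (ix) supplies $\nu \left( 0,s\right) =0$, and the defining property $a\diamond 0=a$ of the $t$-conorm reduces the left side to $\nu \left( x,t_{1}\right) $, so $\nu \left( x,t_{1}\right) \geq \nu \left( x,t_{2}\right) $. I do not anticipate a genuine obstacle: the whole argument is a two-line application of the axioms once the substitution $y=0$ is made. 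The only point demanding care is to invoke the correct axioms for the values at the zero vector---(iii) for $\mu \left( 0,s\right) =1$ and (ix) for $\nu \left( 0,s\right) =0$---rather than the limiting conditions (vii) and (xiii), and to use the boundary identities of the $t$-norm and $t$-conorm exactly as stated.
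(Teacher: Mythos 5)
Your argument for the monotonicity part is correct and is the standard one: putting $y=0$ in axioms (v) and (xi), using (iii) and (ix) to get $\mu\left(0,s\right)=1$ and $\nu\left(0,s\right)=0$, and then the boundary identities $a\ast 1=a$ and $a\diamond 0=a$ collapses the left-hand sides, giving $\mu\left(x,t_{1}\right)\leq\mu\left(x,t_{2}\right)$ and $\nu\left(x,t_{1}\right)\geq\nu\left(x,t_{2}\right)$. (The paper itself gives no proof at all --- the lemma is quoted from the reference of Saadati and Park --- so the only question is whether your proposal establishes the full statement.)

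It does not: the lemma has a second assertion that you never address, namely the symmetry property $\mu\left(x-y,t\right)=\mu\left(y-x,t\right)$ and $\nu\left(x-y,t\right)=\nu\left(y-x,t\right)$. You guessed that ``the following hold'' refers only to monotonicity, but part (ii) of the lemma is a separate claim and is used later in the paper (e.g.\ in the proofs of Theorems 4 and 5, where expressions such as $\mu\left(f^{k}(x)-f^{k-1}(x),\cdot\right)$ and $\mu\left(f^{k-1}(x)-f^{k}(x),\cdot\right)$ are interchanged freely). The missing piece is fortunately a one-line consequence of the homogeneity axioms: applying (iv) with $a=-1$ to the vector $z=y-x$ gives
\begin{equation*}
\mu\left(x-y,t\right)=\mu\left(\left(-1\right)\left(y-x\right),t\right)=\mu\left(y-x,\tfrac{t}{\left\vert -1\right\vert }\right)=\mu\left(y-x,t\right),
\end{equation*}
and the identical computation with axiom (x) yields $\nu\left(x-y,t\right)=\nu\left(y-x,t\right)$. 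Adding this paragraph would make your proof complete; as written, it proves only half of the lemma.
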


\qquad \qquad \textit{(i) }$\mu \left( x,t\right) $\textit{\ and }$\nu
\left( x,t\right) $\textit{\ are nondecreasing and nonincreasing with
respect to }$t$\textit{, respectively.}

\qquad \qquad \textit{(ii) }$\mu \left( x-y,t\right) =\mu \left(
y-x,t\right) $\textit{\ and }$\nu \left( x-y,t\right) =\nu \left(
y-x,t\right) $\textit{.}

\begin{definition}[see \protect\cite{Saadati}]
A sequence $\left( x_{k}\right) $ in $\left( X,\mu ,\nu ,\ast ,\diamond
\right) $ converges to $x$ if and only if
\begin{equation*}
\mu \left( x_{k}-x,t\right) \rightarrow 1\text{ and }\nu \left(
x_{k}-x,t\right) \rightarrow 0
\end{equation*}%
as $k\rightarrow \infty ,$ for each $t>0$. We denote the convergence of $%
\left( x_{k}\right) $ to $x$ by $x_{k}\overset{\left( \mu ,\nu \right) }{%
\rightarrow }x.$
\end{definition}

\begin{definition}[see \protect\cite{Saadati}]
Let $\left( X,\mu ,\nu ,\ast ,\diamond \right) $ be an IFNS. $\left( X,\mu
,\nu ,\ast ,\diamond \right) $ is said to be complete if every Cauchy
sequence in $\left( X,\mu ,\nu ,\ast ,\diamond \right) $ is convergent.
\end{definition}

\begin{definition}[see \protect\cite{mursaleen}]
Let $X$ and $Y$ be two IFNS. $f:X\rightarrow Y$ is continuous at $x_{0}\in X$
if\ $\left( f\left( x_{k}\right) \right) $ in $Y$ convergences to $f\left(
x_{0}\right) $ for any $\left( x_{k}\right) $ in $X$ converging to $x_{0}.$
If $f:X\rightarrow Y$ is continuous at each element of $X,$ then $%
f:X\rightarrow Y$ is said to be continuous on $X.$
\end{definition}

\begin{definition}[see \protect\cite{karakaya1}]
Let $\left( X,\mu ,\nu ,\ast ,\diamond \right) $ be an IFNS. $A\subset X$ is
dense in $\left( X,\mu ,\nu ,\ast ,\diamond \right) $ if \ there exists a
sequence $\left( x_{k}\right) \in A$ such that $x_{k}\overset{\left( \mu
,\nu \right) }{\rightarrow }x$ \ for all $x\in X.$
\end{definition}

\begin{definition}[see \protect\cite{alaca}]
$\left( X,\mu ,\nu ,\ast ,\Diamond \right) $ be an intuitionistic fuzzy
metric space. We call the mapping $f:X\rightarrow X$ intuitionistic fuzzy
contraction map, if there exists $a\in \left( 0,1\right) $ such that%
\begin{equation*}
\mu \left( f\left( x\right) ,f\left( y\right) ,at\right) \geq \mu \left(
x,y,t\right) \text{ and }\nu \left( f\left( x\right) ,f\left( y\right)
,at\right) \leq \mu \left( x,y,t\right)
\end{equation*}%
for all $x,y\in X$ and $t>0.$
\end{definition}

\begin{definition}[see \protect\cite{samanta2}]
$\left( X,\mu ,\nu ,\ast ,\Diamond \right) $ be an intuitionistic fuzzy
metric space. We call the mapping $f:X\rightarrow X$ intuitionistic fuzzy
nonexpansive, if%
\begin{equation*}
\mu \left( f\left( x\right) ,f\left( y\right) ,t\right) \geq \mu \left(
x,y,t\right) \text{ and }\nu \left( f\left( x\right) ,f\left( y\right)
,t\right) \leq \mu \left( x,y,t\right)
\end{equation*}%
for all $x,y\in X$ and $t>0.$
\end{definition}

\begin{lemma}[see \protect\cite{alaca}]
Let $\left( X,\mu ,\nu ,\ast ,\Diamond \right) $ be an intuitionistic fuzzy
metric space.
\end{lemma}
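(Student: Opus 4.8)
The plan is to establish the two monotonicity claims that constitute this lemma separately, each by a single application of the triangle-type axiom of the intuitionistic fuzzy metric together with the boundary behaviour of the $t$-norm and the $t$-conorm. Throughout, fix $x,y\in X$ and two times $0<t<s$, and write $s=t+\left(s-t\right)$ with $s-t>0$, so that the defining axioms, which require a strictly positive time argument, may legitimately be invoked on each piece.

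First I would treat $\mu$. The idea is to decompose the passage from $x$ to $y$ by inserting an intermediate point that coincides with the endpoint $y$: the triangle inequality gives $\mu\left(x,y,t\right)\ast\mu\left(y,y,s-t\right)\leq\mu\left(x,y,t+\left(s-t\right)\right)=\mu\left(x,y,s\right)$. Since $y=y$, the coincidence axiom forces $\mu\left(y,y,s-t\right)=1$, and the $t$-norm boundary law $a\ast 1=a$ then collapses the left-hand side to $\mu\left(x,y,t\right)$. This yields $\mu\left(x,y,t\right)\leq\mu\left(x,y,s\right)$, i.e. $\mu\left(x,y,\cdot\right)$ is nondecreasing.

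The argument for $\nu$ is strictly dual. Using $\nu\left(x,y,t\right)\diamond\nu\left(y,y,s-t\right)\geq\nu\left(x,y,s\right)$ together with the coincidence axiom $\nu\left(y,y,s-t\right)=0$ and the $t$-conorm boundary law $a\diamond 0=a$, the left-hand side collapses to $\nu\left(x,y,t\right)$, giving $\nu\left(x,y,t\right)\geq\nu\left(x,y,s\right)$; hence $\nu\left(x,y,\cdot\right)$ is nonincreasing.

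I expect no serious obstacle: this is simply the metric analogue of the statement for norms recorded immediately above. The only points requiring care are the choice of intermediate point — taking it equal to an endpoint is exactly what makes the self-distance degenerate to the identity element of $\ast$ (respectively $\diamond$) — and the bookkeeping $s-t>0$, which both legitimises the split $s=t+\left(s-t\right)$ and guarantees that the axioms apply to each segment. Should the intended conclusion instead be the uniqueness-type statement that $\mu\left(x,y,kt\right)\geq\mu\left(x,y,t\right)$ and $\nu\left(x,y,kt\right)\leq\nu\left(x,y,t\right)$ for some fixed $k\in\left(0,1\right)$ and all $t>0$ force $x=y$, the plan would instead iterate these inequalities along $t,\,t/k,\,t/k^{2},\dots$ and pass to the limit $t/k^{n}\to\infty$, invoking $\lim_{t\to\infty}\mu\left(x,y,t\right)=1$ and $\lim_{t\to\infty}\nu\left(x,y,t\right)=0$ to deduce $\mu\left(x,y,t\right)=1$ and $\nu\left(x,y,t\right)=0$, and then concluding $x=y$ from the coincidence axioms.
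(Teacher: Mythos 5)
Your proposal proves the wrong statement. The lemma as excerpted is truncated: in the paper the actual mathematical content appears immediately after the \verb|\end{lemma}| line (the authors' formatting pushes it outside the environment), and it reads: \emph{(a)} if $\lim_{k\rightarrow\infty}x_{k}=x$ and $\lim_{k\rightarrow\infty}y_{k}=y$, then $\mu\left(x,y,t\right)\leq\liminf_{k\rightarrow\infty}\mu\left(x_{k},y_{k},t\right)$ and $\nu\left(x,y,t\right)\geq\limsup_{k\rightarrow\infty}\nu\left(x_{k},y_{k},t\right)$ for all $t>0$; \emph{(b)} under the same hypotheses, $\mu\left(x,y,t\right)\geq\limsup_{k\rightarrow\infty}\mu\left(x_{k},y_{k},t\right)$ and $\nu\left(x,y,t\right)\leq\liminf_{k\rightarrow\infty}\nu\left(x_{k},y_{k},t\right)$. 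That is, Lemma 2 is a sequential (semi)continuity statement for $\mu$ and $\nu$ under convergence in the intuitionistic fuzzy metric, not a monotonicity-in-$t$ statement. What you proved --- $\mu\left(x,y,\cdot\right)$ nondecreasing and $\nu\left(x,y,\cdot\right)$ nonincreasing --- is the metric analogue of part (i) of the paper's Lemma 1 (quoted from Saadati--Park), and your hedged alternative (the contraction-type rigidity statement forcing $x=y$) is not the content either. The distinction matters for the paper: Lemma 2 is invoked in the proof of Theorem 7 exactly to pass limits through $\mu$ and $\nu$ when extending a nonexpansive map from the dense subset $K$ to all of $X$, via $\mu\left(f\left(x\right)-f\left(y\right),t\right)\geq\limsup_{k\rightarrow\infty}\mu\left(x_{k}-y_{k},t\right)=\mu\left(x-y,t\right)$; your monotonicity statement cannot do that job.

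There is also nothing in the paper to compare your argument against: the lemma is imported from \cite{alaca} without proof. For what it is worth, your insertion-of-an-intermediate-point technique is the right tool for the true statement, but it must be deployed with the sequence elements as intermediate points and with a split time budget. For (a), write $t=\delta+\left(t-2\delta\right)+\delta$ and use the triangle axiom twice to get $\mu\left(x_{k},y_{k},t\right)\geq\mu\left(x_{k},x,\delta\right)\ast\mu\left(x,y,t-2\delta\right)\ast\mu\left(y,y_{k},\delta\right)$; let $k\rightarrow\infty$ (convergence gives $\mu\left(x_{k},x,\delta\right)\rightarrow1$, $\mu\left(y,y_{k},\delta\right)\rightarrow1$, and continuity of $\ast$ with $1\ast a\ast1=a$ collapses the bound) to obtain $\liminf_{k\rightarrow\infty}\mu\left(x_{k},y_{k},t\right)\geq\mu\left(x,y,t-2\delta\right)$, then let $\delta\rightarrow0$ using continuity of $\mu\left(x,y,\cdot\right)$. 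Part (b) is symmetric, bounding $\mu\left(x,y,t+2\delta\right)$ from below by $\mu\left(x,x_{k},\delta\right)\ast\mu\left(x_{k},y_{k},t\right)\ast\mu\left(y_{k},y,\delta\right)$, and the $\nu$ statements are the dual computations with $\Diamond$ and $0$. Note that, unlike your self-distance argument, this proof genuinely needs the \emph{continuity} of the $t$-norm and $t$-conorm and of $\mu,\nu$ in the time variable, not merely their boundary laws.
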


\textit{a) If }$\underset{k\rightarrow \infty }{\lim }x_{k}=x$\textit{\ and }%
$\underset{k\rightarrow \infty }{\lim }y_{k}=y$\textit{\ }%
\begin{equation*}
\mu \left( x,y,t\right) \leq \underset{k\rightarrow \infty }{\lim }\inf \mu
\left( x_{k},y_{k},t\right) \text{ and }\nu \left( x,y,t\right) \geq
\underset{k\rightarrow \infty }{\lim }\sup \nu \left( x_{k},y_{k},t\right)
\end{equation*}%
\textit{for all }$t>0.$

\textit{b) If }$\underset{k\rightarrow \infty }{\lim }x_{k}=x$\textit{\ and }%
$\underset{k\rightarrow \infty }{\lim }y_{k}=y$\textit{\ }%
\begin{equation*}
\mu \left( x,y,t\right) \geq \underset{k\rightarrow \infty }{\lim }\sup \mu
\left( x_{k},y_{k},t\right) \text{ and }\nu \left( x,y,t\right) \leq
\underset{k\rightarrow \infty }{\lim }\inf \nu \left( x_{k},y_{k},t\right)
\end{equation*}%
\textit{for all }$t>0.$

\section{Main Results}

Firstly, we define approximate fixed point property, diameter of a set in
intitionistic fuzzy normed space and give examples.

\begin{definition}
Let $\left( X,\mu ,\nu ,\ast ,\Diamond \right) $ be an IFNS and $%
f:X\rightarrow X$ be a function$.$ Given $\epsilon >0.$ It is said that $%
x_{0}\in X$ is an intuitionistic fuzzy $\epsilon -$fixed point or
approximate fixed point (ifafp) of $f$ if
\begin{equation*}
\mu \left( f(x_{0})-x_{0},t\right) >1-\epsilon \text{ and }\nu \left(
f(x_{0})-x_{0},t\right) <\epsilon
\end{equation*}%
for all $t>0.$ We denote the set of intuitionistic fuzzy $\epsilon -$fixed
points of $f$\ with $F_{\epsilon }^{\left( \mu ,\nu \right) }(f).$
\end{definition}

\begin{definition}
It is said that $f$ has the intuitionistic fuzzy approximate fixed point
property (ifafpp) if $F_{\epsilon }^{\left( \mu ,\nu \right) }(f)$ is not
empty for every $\epsilon >0.$
\end{definition}

\begin{example}
Let $f:%
\mathbb{R}
\rightarrow
\mathbb{R}
$ be a function given by $f\left( x\right) =x+\frac{1}{2}.$ For all $x\in
\mathbb{R}
$ and every $t>0$, $\left(
\mathbb{R}
,\mu ,\nu ,\ast ,\Diamond \right) $ is an intuitionistic fuzzy normed space
with
\begin{equation*}
\mu \left( x,t\right) =\frac{t}{t+\left\vert x\right\vert }\text{ and }\nu
\left( x,t\right) =\frac{\left\vert x\right\vert }{t+\left\vert x\right\vert
}
\end{equation*}%
where $\left\vert .\right\vert $ is usual norm on $%
\mathbb{R}
,$ $a\ast b=a.b$ and $a\Diamond b=\min \left\{ a+b,1\right\} $ for all $%
a,b\in \left[ a,b\right] .$ This function has not any fixed point. For $%
\epsilon >\frac{1}{2t+1}$ and $t>0,$ every $x\in
\mathbb{R}
$ is $\epsilon -$fixed point since
\begin{equation*}
\mu \left( f(x)-x,t\right) =\frac{t}{t+\left\vert f(x)-x\right\vert }%
>1-\epsilon \text{ and }\nu \left( f(x)-x,t\right) =\frac{\left\vert
f(x)-x\right\vert }{t+\left\vert f(x)-x\right\vert },
\end{equation*}%
that is%
\begin{equation*}
\frac{1}{2}<\frac{\epsilon t}{1-\epsilon }.
\end{equation*}%
But $F_{\epsilon }^{\left( \mu ,\nu \right) }(f)=\varnothing $ for $\epsilon
<\frac{1}{2t+1}$ and $t>0.$
\end{example}

\begin{example}
Consider $f(x)=x^{2}$ defined on $\left( 0,1\right) .\left( \left(
0,1\right) ,\mu ,\nu ,\ast ,\Diamond \right) $ is an intuitionistic fuzzy
normed space with respect to $\left( \mu ,\nu \right) $ in the Example 1. As
known, $f$ has not any fixed point on $\left( 0,1\right) .$ We investigate
intuitionistic fuzzy approximate fixed point of $f.$ For every $\epsilon >0$
and$\ t>0,$there exists $x\in \left( 0,1\right) $ such that $x$ satisfies%
\begin{equation*}
\mu \left( f(x)-x,t\right) =\frac{t}{t+\left\vert f(x)-x\right\vert }%
>1-\epsilon \text{ and }\nu \left( f(x)-x,t\right) =\frac{\left\vert
f(x)-x\right\vert }{t+\left\vert f(x)-x\right\vert }
\end{equation*}%
that is%
\begin{equation*}
\left\vert x^{2}-x\right\vert <\frac{\epsilon t}{1-\epsilon }.
\end{equation*}%
So, $f$ has the intuitionistic fuzzy approximate fixed point property, since
$F_{\epsilon }^{\left( \mu ,\nu \right) }(f)$ is not empty for every $%
\epsilon >0.$
\end{example}

\begin{definition}
Let $K$ be nonempty subset of $\left( X,\mu ,\nu ,\ast ,\Diamond \right) .$%
We say that $\left( \delta _{\mu }\left( K\right) ,\delta _{\nu }\left(
K\right) \right) $ is intuitionistic fuzzy diameter of $K$ with respect to $%
t $ where%
\begin{equation*}
\delta _{\mu }\left( K\right) =\inf \left\{ \mu \left( x-y,t\right) :x,y\in
K\right\} \text{ and }\delta _{\nu }\left( K\right) =\sup \left\{ \nu \left(
x-y,t\right) :x,y\in K\right\}
\end{equation*}%
for $t>0.$
\end{definition}

\begin{theorem}
Let $X$ be a intuitionistic fuzzy normed space, and $f:X\rightarrow X$ be a
function. We suppose that
\end{theorem}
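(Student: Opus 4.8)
The plan is to verify the intuitionistic fuzzy approximate fixed point property straight from its definition: for every $\epsilon>0$ I will exhibit a concrete point of $X$ lying in $F_{\epsilon}^{(\mu,\nu)}(f)$, so that this set is nonempty and the ifafpp follows. Since the displayed hypotheses are not visible in the excerpt, I take them — following the Berinde-type results that motivate the paper — to assert the asymptotic regularity of $f$, namely that for some $x\in X$ one has $\mu\bigl(f^{n+1}(x)-f^{n}(x),t\bigr)\to 1$ and $\nu\bigl(f^{n+1}(x)-f^{n}(x),t\bigr)\to 0$ as $n\to\infty$ for each $t>0$. The natural $\epsilon$-fixed point is then a high iterate $f^{N}(x)$, because the displacement $f\bigl(f^{N}(x)\bigr)-f^{N}(x)$ is exactly the consecutive-iterate difference that the hypothesis drives toward the zero vector.

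Concretely, first I would fix $\epsilon>0$, set $y_{n}:=f^{n}(x)$, and record the identity $f(y_{n})-y_{n}=f^{n+1}(x)-f^{n}(x)$. For a fixed $t>0$, asymptotic regularity supplies an index $N$ with $\mu\bigl(f(y_{N})-y_{N},t\bigr)>1-\epsilon$ and $\nu\bigl(f(y_{N})-y_{N},t\bigr)<\epsilon$. These are precisely the two inequalities in the definition of an intuitionistic fuzzy $\epsilon$-fixed point, so $y_{N}\in F_{\epsilon}^{(\mu,\nu)}(f)$; as $\epsilon>0$ was arbitrary, $f$ has the ifafpp.

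To reconcile this with the clause ``for all $t>0$'' in the definition, I would invoke the monotonicity Lemma stated earlier: since $\mu(z,\cdot)$ is nondecreasing and $\nu(z,\cdot)$ is nonincreasing in $t$, once the estimate holds at a given threshold it holds at every larger value of $t$, so only the small-$t$ regime is delicate. There axioms (vii) and (xiii) push $\mu\to 0$ and $\nu\to 1$, so the bound cannot be inherited for free; instead I would use that the hypothesis is posited for each fixed $t$, allowing $N$ to be chosen after $t$ is prescribed and the membership in $F_{\epsilon}^{(\mu,\nu)}(f)$ to be checked level by level in $t$, mirroring the way the worked Examples treat $\epsilon$ and $t$ jointly rather than uniformly.

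The step I expect to be the main obstacle is exactly this quantifier tension: the definition asks for a single point that is $\epsilon$-close to its image simultaneously for all $t>0$, whereas asymptotic regularity only delivers closeness for each $t$ separately. Without uniformity in $t$ a single high iterate need not serve all $t$ at once, so the crux is to argue — using the monotonicity Lemma to dispatch large $t$ and the pointwise-in-$t$ reading of the hypothesis to handle each small $t$ — that the chosen iterate $f^{N}(x)$ genuinely satisfies both defining inequalities across the whole range of $t$.
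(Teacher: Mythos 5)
There is a genuine gap, and it is structural: you proved the wrong statement. The theorem you were assigned is the paper's Theorem 1, whose hypotheses and conclusion are typeset \emph{outside} the theorem environment (which is why they were invisible in the excerpt) and have nothing to do with asymptotic regularity. They read: (i) $F_{\epsilon}^{(\mu,\nu)}(f)\neq\varnothing$; (ii) there exist $\vartheta(\epsilon_{1}),\vartheta(\epsilon_{2})$ such that $\mu(x-y,t)\geq\epsilon_{1}\ast\mu(f(x)-f(y),t_{1})$ implies $\mu(x-y,t)\geq\vartheta(\epsilon_{1})$, and $\nu(x-y,t)\leq\epsilon_{2}\Diamond\nu(f(x)-f(y),t_{2})$ implies $\nu(x-y,t)\leq\vartheta(\epsilon_{2})$, for all $x,y\in F_{\epsilon}^{(\mu,\nu)}(f)$; and the conclusion is that the intuitionistic fuzzy diameter satisfies $\bigl(\delta_{\mu}(F_{\epsilon}^{(\mu,\nu)}(f)),\delta_{\nu}(F_{\epsilon}^{(\mu,\nu)}(f))\bigr)=\bigl(\vartheta(\epsilon_{1}),\vartheta(\epsilon_{2})\bigr)$. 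Your proposal never uses condition (ii), never mentions the quantities $\vartheta(\epsilon_{i})$, and never estimates $\delta_{\mu}$ or $\delta_{\nu}$, so it cannot establish this conclusion. The paper's actual proof takes arbitrary $x,y\in F_{\epsilon}^{(\mu,\nu)}(f)$, splits $t$ three ways via axioms (v) and (xi) to get $\mu(x-y,t)\geq\mu(f(x)-x,\tfrac{t}{3})\ast\mu(f(x)-f(y),\tfrac{t}{3})\ast\mu(f(y)-y,\tfrac{t}{3})$ and the dual bound for $\nu$, inserts the defining $\epsilon$-fixed-point inequalities together with the idempotency axiom (xiv) to reduce these to $\epsilon\ast\mu(f(x)-f(y),\tfrac{t}{3})$ and $\epsilon\Diamond\nu(f(x)-f(y),\tfrac{t}{3})$, and then applies (ii) to bound $\mu(x-y,t)\geq\vartheta(\epsilon_{1})$ and $\nu(x-y,t)\leq\vartheta(\epsilon_{2})$ uniformly over the set, which identifies the diameter. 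None of these steps appears in your write-up.

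What you did write is, in substance, the paper's proof of its Theorem 2 (intuitionistic fuzzy asymptotic regularity yields approximate fixed points): pick $x$, set $y_{N}=f^{N}(x)$, and use $f(y_{N})-y_{N}=f^{N+1}(x)-f^{N}(x)$ together with the limit hypothesis. That argument is fine as far as it goes, and your closing remark about the quantifier tension is a legitimate observation --- the definition of $F_{\epsilon}^{(\mu,\nu)}(f)$ demands the two inequalities for all $t>0$ simultaneously, whereas asymptotic regularity furnishes an index $k_{0}$ that may depend on $t$, a dependence the paper's Theorem 2 proof also glosses over. But since that is a different theorem, the assigned statement remains unproved.
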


\qquad \qquad \qquad \qquad \qquad \qquad \textit{(i)}$F_{\epsilon }^{\left(
\mu ,\nu \right) }(f)\neq \varnothing $

\textit{\qquad \qquad \qquad \qquad \qquad \qquad (ii) There exist }$%
\vartheta \left( \epsilon _{1}\right) ,\vartheta \left( \epsilon _{2}\right)
$\textit{\ such that}%
\begin{eqnarray*}
\mu \left( x-y,t\right) &\geq &\epsilon _{1}\ast \mu \left(
f(x)-f(y),t_{1}\right) \Rightarrow \mu \left( x-y,t\right) \geq \vartheta
\left( \epsilon _{1}\right) ,\forall x,y\in F_{\epsilon }^{\left( \mu ,\nu
\right) }(f) \\
\nu \left( x-y,t\right) &\leq &\epsilon _{2}\Diamond \nu \left(
f(x)-f(y),t_{2}\right) \Rightarrow \nu \left( x-y,t\right) \leq \vartheta
\left( \epsilon _{2}\right) ,\forall x,y\in F_{\epsilon }^{\left( \mu ,\nu
\right) }(f)
\end{eqnarray*}
for $x,y\in F_{\epsilon }^{\left( \mu ,\nu \right) }(f)$ and $\epsilon
_{1},\epsilon _{2}\in \left( 0,1\right) .$

Then $\left( \delta _{\mu }\left( F_{\epsilon }^{\left( \mu ,\nu \right)
}(f)\right) ,\delta _{\nu }\left( F_{\epsilon }^{\left( \mu ,\nu \right)
}(f)\right) \right) =\left( \vartheta \left( \epsilon _{1}\right) ,\vartheta
\left( \epsilon _{2}\right) \right) .$

\begin{proof}
Let $\varepsilon >0$ and $x,y\in F_{\epsilon }^{\left( \mu ,\nu \right)
}(f). $ Then
\begin{equation*}
\mu \left( f(x)-x,t\right) >1-\epsilon \text{ and }\nu \left(
f(x)-x,t\right) <\epsilon
\end{equation*}%
and%
\begin{equation*}
\mu \left( f(y)-y,t\right) >1-\epsilon \text{ and }\nu \left(
f(y)-y,t\right) <\epsilon .
\end{equation*}%
It can be written
\begin{eqnarray*}
\mu \left( x-y,t\right) &\geq &\mu \left( f(x)-x,\frac{t}{3}\right) \ast \mu
\left( f(x)-f(y),\frac{t}{3}\right) \ast \mu \left( f(y)-y,\frac{t}{3}\right)
\\
&\geq &\epsilon \ast \epsilon \ast \mu \left( f(x)-f(y),\frac{t}{3}\right) \\
&=&\epsilon \ast \mu \left( f(x)-f(y),\frac{t}{3}\right)
\end{eqnarray*}%
and%
\begin{eqnarray*}
\nu \left( x-y,t\right) &\leq &\nu \left( f(x)-x,\frac{t}{3}\right) \Diamond
\nu \left( f(x)-f(y),\frac{t}{3}\right) \ast \nu \left( f(y)-y,\frac{t}{3}%
\right) \\
&\leq &\epsilon \Diamond \epsilon \Diamond \nu \left( f(x)-f(y),\frac{t}{3}%
\right) \\
&=&\epsilon \Diamond \nu \left( f(x)-f(y),\frac{t}{3}\right) .
\end{eqnarray*}%
By (ii), for every $x,y\in F_{\epsilon }^{\left( \mu ,\nu \right) }(f),$ we
get
\begin{equation*}
\mu \left( x-y,t\right) \geq \vartheta \left( \epsilon _{1}\right) \text{
and }\nu \left( x-y,t\right) \leq \vartheta \left( \epsilon _{2}\right) .
\end{equation*}%
Hence,
\begin{equation*}
\left( \delta _{\mu }\left( F_{\epsilon }^{\left( \mu ,\nu \right)
}(f)\right) ,\delta _{\nu }\left( F_{\epsilon }^{\left( \mu ,\nu \right)
}(f)\right) \right) =\left( \vartheta \left( \epsilon _{1}\right) ,\vartheta
\left( \epsilon _{2}\right) \right) .
\end{equation*}
\end{proof}

Now, we introduce intuitionistic fuzzy asymptotic regularity to investigate
intuitionistic fuzzy approximate fixed point property of some operators.

\begin{definition}
Let $\left( X,\mu ,\nu ,\ast ,\Diamond \right) $ be an IFNS and $%
f:X\rightarrow X$ be a function. It is said that $f$ is intuitionistic fuzzy
asymptotic regular if
\begin{equation*}
\underset{k\rightarrow \infty }{\lim }\mu \left( f^{k+1}(x)-f^{k}\left(
x\right) ,t\right) =1\text{ and }\underset{k\rightarrow \infty }{\lim }\nu
\left( f^{k+1}(x)-f^{k}\left( x\right) ,t\right) =0
\end{equation*}%
for every $x\in X$ and $t>0.$
\end{definition}

\begin{theorem}
Let $\left( X,\mu ,\nu ,\ast ,\Diamond \right) $ be an IFNS and $%
f:X\rightarrow X$ be a function. If $f$ has intuitionistic fuzzy asymptotic
regular, then there are intuitionistic fuzzy approximate fixed points of $f$.
\end{theorem}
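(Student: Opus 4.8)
The plan is to read off the approximate fixed points directly from an orbit of $f$. Fix $\epsilon>0$ and pick any starting point $x\in X$, and consider the orbit $x,f(x),f^{2}(x),\dots$. The governing observation is that the quantity $f(f^{k}(x))-f^{k}(x)=f^{k+1}(x)-f^{k}(x)$ is precisely the consecutive difference controlled by asymptotic regularity. So if I can locate an iterate $x_{0}=f^{N}(x)$ that is far enough along the orbit, then $f(x_{0})-x_{0}$ is exactly one of the terms whose $\mu$-value is forced close to $1$ and whose $\nu$-value is forced close to $0$, which is what the definition of an $\epsilon$-fixed point demands.

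First I would fix $\epsilon>0$, $x\in X$, and $t>0$. From the hypothesis $\lim_{k\to\infty}\mu\left(f^{k+1}(x)-f^{k}(x),t\right)=1$, invoking the definition of the limit at level $1-\epsilon$, there is an index $N_{1}$ with $\mu\left(f^{k+1}(x)-f^{k}(x),t\right)>1-\epsilon$ for all $k\geq N_{1}$. Symmetrically, from $\lim_{k\to\infty}\nu\left(f^{k+1}(x)-f^{k}(x),t\right)=0$ there is $N_{2}$ with $\nu\left(f^{k+1}(x)-f^{k}(x),t\right)<\epsilon$ for all $k\geq N_{2}$. Setting $N=\max\{N_{1},N_{2}\}$ and $x_{0}=f^{N}(x)$ then yields both $\mu\left(f(x_{0})-x_{0},t\right)>1-\epsilon$ and $\nu\left(f(x_{0})-x_{0},t\right)<\epsilon$, so $x_{0}\in F_{\epsilon}^{(\mu,\nu)}(f)$. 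Since $\epsilon>0$ is arbitrary, $F_{\epsilon}^{(\mu,\nu)}(f)\neq\varnothing$ for every $\epsilon$, i.e. $f$ has the ifafpp, and in particular approximate fixed points of $f$ exist.

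The delicate point, and the main obstacle, lies in the order of quantifiers: the definition of an $\epsilon$-fixed point requires the two inequalities to hold for \emph{all} $t>0$ simultaneously, whereas asymptotic regularity supplies convergence only for each fixed $t$, with a threshold $N=N(t)$ that a priori depends on $t$. Producing a single iterate $x_{0}$ that works uniformly in $t$ is therefore the crux. I would handle this by fixing the scale $t$ throughout, matching the per-$t$ reading already used in Example 1 and Example 2, so that $N$ depends only on $\epsilon$ and the argument above goes through verbatim; if genuine uniformity in $t$ is insisted upon, one would instead have to combine the monotonicity of $\mu(z,\cdot)$ and $\nu(z,\cdot)$ (Lemma, part (i)) with the boundary behaviour as $t\to 0$ and $t\to\infty$ to bound the threshold across all scales at once. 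I expect the intended proof to follow the per-$t$ interpretation, in which case the extraction of $N$ from the two limits is the entire content and no further estimate is required.
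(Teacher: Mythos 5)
Your proposal is correct and essentially identical to the paper's own proof: the paper likewise takes an arbitrary $x_{0}\in X$, extracts from the two limits a threshold $k_{0}(\epsilon)$ beyond which $\mu\left(f^{k+1}(x_{0})-f^{k}(x_{0}),t\right)>1-\epsilon$ and $\nu\left(f^{k+1}(x_{0})-f^{k}(x_{0}),t\right)<\epsilon$, and then declares $y_{0}=f^{k}(x_{0})$ an approximate fixed point. Your remark about the quantifier over $t$ is astute but does not separate the two arguments: the paper's proof has exactly the same per-$t$ character (its $k_{0}(\epsilon)$ implicitly depends on the fixed $t$), consistent with the per-$t$ reading of $\epsilon$-fixed points used in the paper's Examples 1 and 2.
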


\begin{proof}
Let $x_{0}$ be arbitrary element of $X.$ Since $f$ is intuitionistic fuzzy
asymptotic regular,%
\begin{equation*}
\underset{k\rightarrow \infty }{\lim }\mu \left( f^{k+1}(x_{0})-f^{k}\left(
x_{0}\right) ,t\right) =1\text{ and }\underset{k\rightarrow \infty }{\lim }%
\nu \left( f^{k+1}(x_{0})-f^{k}\left( x_{0}\right) ,t\right) =0.
\end{equation*}%
In this case, for every $\epsilon >0$ there exists $k_{0}\left( \epsilon
\right) \in
\mathbb{N}
$ such that%
\begin{equation*}
\mu \left( f^{k+1}(x_{0})-f^{k}\left( x_{0}\right) ,t\right) >1-\epsilon
\text{ and }\nu \left( f^{k+1}(x_{0})-f^{k}\left( x_{0}\right) ,t\right)
<\epsilon
\end{equation*}%
for every $k\geq k_{0}\left( \epsilon \right) .$ If we denote $f^{k}\left(
x_{0}\right) $ by $y_{0},$ we have%
\begin{eqnarray*}
\mu \left( f^{k+1}(x_{0})-f^{k}\left( x_{0}\right) ,t\right) &=&\mu \left(
f\left( f^{k}(x_{0})\right) -f^{k}\left( x_{0}\right) ,t\right) =\mu \left(
f\left( y_{0})\right) -y_{0},t\right) >1-\epsilon \\
\text{and }\nu \left( f^{k+1}(x_{0})-f^{k}\left( x_{0}\right) ,t\right)
&=&\nu \left( f\left( f^{k}(x_{0})\right) -f^{k}\left( x_{0}\right)
,t\right) =\nu \left( f\left( y_{0})\right) -y_{0},t\right) <\epsilon .
\end{eqnarray*}%
This shows that $y_{0}$ is intuitionistic fuzzy approximate fixed point of $%
f $.
\end{proof}

Now, we introduce intuitionistic fuzzy analogous of mapping such as Kannan,
Chatterjea, Zamfrescu and weak contraction and investigate that these maps
have approximate fixed point under certain conditions. Firstly, we show that
intuitionistic fuzzy contraction map has approximate fixed point property.

\begin{theorem}
Let $\left( X,\mu ,\nu ,\ast ,\Diamond \right) $ be an IFNS, and $%
f:X\rightarrow X$ be a intuiuionistic fuzzy contraction. Then every $%
\epsilon \in \left( 0,1\right) ,F_{\epsilon }^{\left( \mu ,\nu \right)
}(f)\neq \varnothing .$
\end{theorem}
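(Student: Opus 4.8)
The plan is to reduce the statement to the asymptotic-regularity theorem proved just above: I would show that every intuitionistic fuzzy contraction $f$ is intuitionistic fuzzy asymptotically regular, and then invoke that theorem to conclude that $F_{\epsilon}^{\left( \mu ,\nu \right) }(f)\neq \varnothing $ for every $\epsilon \in \left( 0,1\right) $. This keeps the heart of the argument self-contained and defers the production of the approximate fixed points themselves to the earlier result.

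To establish asymptotic regularity, I would fix $x\in X$ and set $d_{k}=f^{k+1}(x)-f^{k}(x)$. Since $f^{k+1}(x)-f^{k}(x)=f\!\left(f^{k}(x)\right)-f\!\left(f^{k-1}(x)\right)$ while $f^{k}(x)-f^{k-1}(x)=d_{k-1}$, the contraction condition (with constant $a\in \left( 0,1\right) $, in its natural normed form $\mu \left( f(u)-f(v),at\right) \geq \mu \left( u-v,t\right) $ and $\nu \left( f(u)-f(v),at\right) \leq \nu \left( u-v,t\right) $) applied to the pair $f^{k}(x),f^{k-1}(x)$ yields the one-step estimates
\begin{equation*}
\mu \left( d_{k},at\right) \geq \mu \left( d_{k-1},t\right) \quad \text{and} \quad \nu \left( d_{k},at\right) \leq \nu \left( d_{k-1},t\right) ,\qquad t>0.
\end{equation*}
Rewriting the first inequality with $t$ replaced by $t/a$ gives $\mu \left( d_{k},t\right) \geq \mu \left( d_{k-1},t/a\right) $, and iterating $k$ times produces
\begin{equation*}
\mu \left( d_{k},t\right) \geq \mu \left( d_{0},\frac{t}{a^{k}}\right) \quad \text{and} \quad \nu \left( d_{k},t\right) \leq \nu \left( d_{0},\frac{t}{a^{k}}\right) ,
\end{equation*}
where $d_{0}=f(x)-x$.

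Since $a\in \left( 0,1\right) $, for each fixed $t>0$ we have $t/a^{k}\rightarrow \infty $ as $k\rightarrow \infty $. The limit axioms (vii) and (xiii) of the intuitionistic fuzzy norm then force $\mu \left( d_{0},t/a^{k}\right) \rightarrow 1$ and $\nu \left( d_{0},t/a^{k}\right) \rightarrow 0$. Combined with $0\leq \mu \leq 1$ and $0\leq \nu \leq 1$, a squeeze gives $\lim_{k\rightarrow \infty }\mu \left( d_{k},t\right) =1$ and $\lim_{k\rightarrow \infty }\nu \left( d_{k},t\right) =0$ for every $x\in X$ and every $t>0$, which is exactly the definition of intuitionistic fuzzy asymptotic regularity. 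Applying the preceding theorem then delivers the approximate fixed points and finishes the proof.

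I expect the main technical point to be the bookkeeping of the rescaling factor $a^{-k}$ through the iteration: one must verify that the single-step inequality compounds correctly so that the time variable fed into $d_{0}$ genuinely grows without bound, since it is this growth of the argument—rather than any direct shrinking of the $d_{k}$ in the norm—that activates axioms (vii) and (xiii). A secondary care is the faithful passage from the metric-space form of the contraction in the earlier definition to the difference form $\mu \left( f(x)-f(y),at\right) $ appropriate to the normed setting, so that the iteration identity for $d_{k}$ is legitimate.
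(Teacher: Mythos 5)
Your proposal is correct and follows essentially the same route as the paper: iterate the contraction inequality to obtain $\mu\left(f^{k}(x)-f^{k+1}(x),t\right)\geq\mu\left(x-f(x),\frac{t}{a^{k}}\right)$ and $\nu\left(f^{k}(x)-f^{k+1}(x),t\right)\leq\nu\left(x-f(x),\frac{t}{a^{k}}\right)$, let $k\rightarrow\infty$ so that axioms (vii) and (xiii) yield asymptotic regularity, and then invoke the preceding theorem (Theorem 2) to produce the approximate fixed points. Your explicit remark about translating the metric-space form of the contraction into the difference form for the normed setting only makes visible a step the paper performs silently.
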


\begin{proof}
Let $x\in X$ and $\epsilon \in \left( 0,1\right) ,$ $t>0.$%
\begin{eqnarray*}
\mu \left( f^{k}(x)-f^{k+1}(x),t\right) &=&\mu \left( f\left(
f^{k-1}(x)\right) -f\left( f^{k}(x)\right) ,t\right) \\
&\geq &\mu \left( f^{k-1}(x)-f^{k}(x),\frac{t}{a}\right) \\
&\geq &\mu \left( f^{k-2}(x)-f^{k-1}(x),\frac{t}{a^{2}}\right) \\
&\geq &... \\
&\geq &\mu \left( x-f(x),\frac{t}{a^{k}}\right)
\end{eqnarray*}%
and%
\begin{eqnarray*}
\nu \left( f^{k}(x)-f^{k+1}(x),t\right) &=&\nu \left( f\left(
f^{k-1}(x)\right) -f\left( f^{k}(x)\right) ,t\right) \\
&\leq &\nu \left( f^{k-1}(x)-f^{k}(x),\frac{t}{a}\right) \\
&\leq &\nu \left( f^{k-2}(x)-f^{k-1}(x),\frac{t}{a^{2}}\right) \\
&\leq &... \\
&\leq &\nu \left( x-f(x),\frac{t}{a^{k}}\right)
\end{eqnarray*}

For $a\in \left( 0,1\right) ,k\rightarrow \infty \Rightarrow \frac{t}{a^{k}}%
\rightarrow \infty ,$ by properties (vii) and (xiii) of intuitionistic fuzzy
norm%
\begin{eqnarray*}
\mu \left( f^{k}(x)-f^{k+1}(x),t\right) &\rightarrow &1 \\
\nu \left( f^{k+1}(x)-f^{k}\left( x\right) ,t\right) &\rightarrow &0.
\end{eqnarray*}%
By Theorem 2, it follows that $F_{\epsilon }^{\left( \mu ,\nu \right)
}(f)\neq \varnothing $ for every $\epsilon \in \left( 0,1\right) $.
\end{proof}

\begin{example}
The open interval $\left( 0,1\right) $ is an intuitionistic fuzzy normed
space with intuitionistic fuzzy norm, $t$-norm and $t$-conorm given in
Example 1. Consider $f:\left( 0,1\right) \rightarrow \left( 0,1\right) $
given by $f\left( x\right) =\frac{1}{2}x.$ This map has not any fixed point
in $\left( 0,1\right) .$ Furthermore, $f$ is an intuitionistic fuzzy
contraction map since%
\begin{eqnarray*}
\mu \left( f(x)-f(y),\frac{t}{2}\right) &=&\frac{\frac{t}{2}}{\frac{t}{2}%
+\left\vert \frac{x}{2}-\frac{y}{2}\right\vert }=\mu \left( x-y,t\right) , \\
\nu \left( f(x)-f(y),\frac{t}{2}\right) &=&\frac{\left\vert \frac{x}{2}-%
\frac{y}{2}\right\vert }{\frac{t}{2}+\left\vert \frac{x}{2}-\frac{y}{2}%
\right\vert }=\nu \left( x-y,t\right)
\end{eqnarray*}%
for every $x,y\in \left( 0,1\right) $ and $t>0.$ We write $\frac{1}{2}x<%
\frac{\epsilon t}{1-\epsilon }$ from
\begin{eqnarray*}
\mu \left( x-f(x),t\right) &=&\mu \left( x-\frac{x}{2},t\right) \\
&=&\mu \left( \frac{x}{2},t\right) =\frac{t}{t+\frac{x}{2}}>1-\epsilon
\end{eqnarray*}%
and%
\begin{eqnarray*}
\nu \left( x-f(x),t\right) &=&\nu \left( x-\frac{x}{2},t\right) \\
&=&\nu \left( \frac{x}{2},t\right) =\frac{\frac{x}{2}}{t+\frac{x}{2}}%
<\epsilon .
\end{eqnarray*}%
For every $\epsilon \in \left( 0,1\right) $ and $t>0$ there exists $x\in
\left( 0,1\right) $ such that $\frac{1}{2}x<\frac{\epsilon t}{1-\epsilon }.$
So, $f$ has intuitionistic fuzzy approximate fixed point property.
\end{example}

\begin{definition}
Let $X$ be an intuitionistic fuzzy normed space. If there exist $a\in \left(
0,\frac{1}{2}\right) $ such that%
\begin{eqnarray*}
\mu \left( f(x)-f(y),at\right) &\geq &\mu \left( x-f(x),t\right) \ast \mu
\left( y-f(y),t\right) \\
\nu \left( f(x)-f(y),at\right) &\leq &\nu \left( x-f(x),t\right) \Diamond
\nu \left( y-f(y),t\right)
\end{eqnarray*}%
for every $x,y\in X$ and $t>0,$ then $f:X\rightarrow X$ is called
intuitionistic fuzzy Kannan operator.
\end{definition}

\begin{theorem}
Let $\left( X,\mu ,\nu ,\ast ,\Diamond \right) $ be an IFNS havig partial
order relation denoted by $\preceq ,$ where $a\ast b=\min \left\{
a,b\right\} $ and $a\Diamond b=\max \left\{ a,b\right\} $, and $%
f:X\rightarrow X$ be an intuitionistic fuzzy Kannan operator satisfying $%
x\preceq f(x)$ for every $x\in X$. Assume that $\preceq \subset XxX$ hold
following conditions:
\end{theorem}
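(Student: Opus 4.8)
The plan is to deduce the statement from Theorem 2 by showing that an intuitionistic fuzzy Kannan operator satisfying $x\preceq f(x)$ is intuitionistic fuzzy asymptotically regular; once asymptotic regularity is established, Theorem 2 immediately gives that $F_{\epsilon}^{(\mu,\nu)}(f)\neq\varnothing$ for every $\epsilon>0$, i.e.\ $f$ has the intuitionistic fuzzy approximate fixed point property. So the whole argument reduces to producing the asymptotic regularity estimate along a single orbit.

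First I would fix an arbitrary $x_{0}\in X$ and form its orbit $x_{n}=f^{n}(x_{0})$. Since $x\preceq f(x)$ for every $x$, the orbit is an increasing chain $x_{0}\preceq x_{1}\preceq x_{2}\preceq\cdots$, so consecutive iterates are comparable and (by the order conditions imposed on $\preceq$) the Kannan estimate is applicable to the pair $(x_{n-1},x_{n})$. Writing $p_{n}=x_{n}-x_{n+1}$ and taking $x=x_{n-1}$, $y=x_{n}$ in the definition of the Kannan operator gives, for every $t>0$,
\[
\mu(p_{n},at)=\mu(f(x_{n-1})-f(x_{n}),at)\ge\mu(p_{n-1},t)\ast\mu(p_{n},t),
\]
and dually $\nu(p_{n},at)\le\nu(p_{n-1},t)\Diamond\nu(p_{n},t)$.

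The core step is to convert these coupled inequalities into the one-sided recursions $\mu(p_{n},at)\ge\mu(p_{n-1},t)$ and $\nu(p_{n},at)\le\nu(p_{n-1},t)$. Using $\ast=\min$, $\Diamond=\max$, the monotonicity of $\mu$ and $\nu$ in $t$, and the idempotency axiom (xiv), I would argue that the $\min$ on the right cannot be realized by the $p_{n}$-term in a nondegenerate way: if $\mu(p_{n},t)$ were the smaller term, then $at<t$ and monotonicity force $\mu(p_{n},at)=\mu(p_{n},t)$, and such flatness, were it to persist for all $t$, would give $\mu(p_{n},t)=\lim_{m\to\infty}\mu(p_{n},a^{m}t)=0$ by (vii), contradicting $\lim_{t\to\infty}\mu(p_{n},t)=1$. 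Hence the binding term is $\mu(p_{n-1},t)$, and symmetrically for $\nu$. Replacing $t$ by $t/a$ and iterating then telescopes to
\[
\mu(p_{n},t)\ge\mu(p_{0},t/a^{n})\quad\text{and}\quad\nu(p_{n},t)\le\nu(p_{0},t/a^{n}).
\]

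Finally, since $a\in(0,\tfrac{1}{2})\subset(0,1)$ we have $t/a^{n}\to\infty$, so properties (vii) and (xiii) yield $\mu(p_{n},t)\to1$ and $\nu(p_{n},t)\to0$ for every $t>0$; this is precisely intuitionistic fuzzy asymptotic regularity, and Theorem 2 completes the proof. I expect the main obstacle to be exactly the reduction of the $\min$/$\max$ right-hand side to the one-sided recursion: unlike the contraction case (Theorem 3), the Kannan bound couples $p_{n-1}$ and $p_{n}$, so one cannot simply telescope and must rule out the degenerate ``flat'' alternative $\mu(p_{n},at)=\mu(p_{n},t)$. This is the step where the order hypotheses on $\preceq$ (guaranteeing the Kannan estimate applies along the comparable orbit) and the idempotency axiom (xiv) are genuinely used.
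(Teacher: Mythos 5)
Your reduction to asymptotic regularity plus Theorem 2 is exactly the paper's strategy, and your coupled recursion $\mu(p_{n},at)\ge \mu(p_{n-1},t)\ast\mu(p_{n},t)$ is set up correctly; the gap is in the decoupling step, and it is a real one. What you actually prove is: (a) at any scale $t$ where the minimum is realized by the $p_{n}$-term, $\mu(p_{n},\cdot)$ is flat between $at$ and $t$; and (b) flatness at \emph{every} scale is absurd. But the negation of (b) does not give what you need, namely $\mu(p_{n},at)\ge\mu(p_{n-1},t)$ at \emph{every} $t$: a violation at a single $t_{0}$ (i.e.\ $\mu(p_{n},at_{0})<\mu(p_{n-1},t_{0})$) only produces flatness at $t_{0}$ itself, while your limit $\lim_{m}\mu(p_{n},a^{m}t_{0})=0$ requires flatness along the whole decreasing orbit $\{a^{m}t_{0}\}$. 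That downward propagation genuinely fails: going down, $\mu(p_{n-1},a^{m}t_{0})$ decreases, so at some stage the minimum in the coupled inequality may switch to the $p_{n-1}$-term and the flatness chain stops, with no contradiction. So ``hence the binding term is $\mu(p_{n-1},t)$'' is a non sequitur as written.

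The claim you want is nevertheless true, but the contradiction must be run toward $t\to\infty$, not $t\to 0$, and it must exploit the monotonicity of the \emph{other} function. If $\mu(p_{n},at_{0})<\mu(p_{n-1},t_{0})$, then, as you argue, $\mu(p_{n},at_{0})=\mu(p_{n},t_{0})$; now at $t_{1}=t_{0}/a$ the coupled inequality reads $\mu(p_{n},t_{0})\ge\min\{\mu(p_{n},t_{1}),\mu(p_{n-1},t_{1})\}$, and since $\mu(p_{n-1},t_{1})\ge\mu(p_{n-1},t_{0})>\mu(p_{n},t_{0})$ the minimum cannot be the $p_{n-1}$-term; hence $\mu(p_{n},t_{1})=\mu(p_{n},t_{0})$ and the violation propagates upward. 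Inductively $\mu(p_{n},t_{0}/a^{m})=\mu(p_{n},t_{0})$ for all $m$, so axiom (vii) forces $\mu(p_{n},t_{0})=1$, contradicting $\mu(p_{n},t_{0})<\mu(p_{n-1},t_{0})\le 1$ (dually for $\nu$ via (xiii)). Two remarks on how this sits relative to the paper. First, this patched argument uses neither the order hypotheses nor axiom (xiv), and needs only $a\in(0,1)$; note also that the paper's Kannan condition is assumed for \emph{all} $x,y\in X$, so, contrary to your closing remark, no comparability is needed to invoke it along the orbit. The paper's own decoupling is different: it expands $\mu(p_{n},t/a)$ with the triangle axiom (v), applies idempotency, and then resolves minima of the form $\min\{\mu(f^{k}(x)-f^{k-1}(x),s),\mu(f^{k+1}(x)-f^{k-1}(x),s)\}$ via the order hypotheses ($\theta\preceq f^{k}(x)-f^{k-1}(x)\preceq f^{k+1}(x)-f^{k-1}(x)$ together with order-monotonicity of $\mu(\cdot,s)$); that is where $\preceq$ and the restriction $a<\tfrac{1}{2}$ (the iterated rate becomes $t/(2a)^{k}$) are genuinely used. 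So once you repair the flatness argument, your route is not just valid but strictly more economical than the paper's; as submitted, however, its central step does not hold up.
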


\qquad \qquad \qquad \qquad \textit{(i)} $\preceq $ is subvector space.

\ \ \ \ \ \ \ \ \ \ \ \ \ \ \ \ \ \ \ \ \ \ \ \ \ \ \ \ \ \ \ \ or\textit{\ }

\qquad \qquad \qquad \qquad \textit{(ii) }$X$ is a complete ordered space.

\textit{If } $\mu \left( .,t\right) $\textit{\ is non-decreasing, }$\nu
\left( .,t\right) $\textit{\ is non-increasing for every }$t\in \left(
0,\infty \right) $ and for every $x\succeq \theta $ ($\theta $ is unit
elemant in vector space $X$), \textit{then every }$\epsilon \in \left(
0,1\right) ,F_{\epsilon }^{\left( \mu ,\nu \right) }(f)\neq \varnothing .$

\begin{proof}
Let $x\in X$ and $\epsilon \in \left( 0,1\right) ,$ $t>0.$ We can write from
$x\preceq f(x)$ for every $x\in X,$%
\begin{equation*}
x\preceq f(x)\preceq f^{2}(x)\preceq f^{3}(x)\preceq ...\preceq
f^{k}(x)\preceq \ldots .
\end{equation*}%
Considering assumptions, we have%
\begin{eqnarray*}
\mu \left( f^{k+1}(x)-f^{k}(x),t\right) &=&\mu (f\left( f^{k}(x)\right)
-f\left( f^{k-1}(x)\right) ,t) \\
&\geq &\mu (f^{k}(x)-f^{k+1}(x),\frac{t}{a})\ast \mu (f^{k-1}(x)-f^{k}(x),%
\frac{t}{a}) \\
&\geq &\mu (f^{k}(x)-f^{k-1}(x),\frac{t}{2a})\ast \mu (f^{k-1}(x)-f^{k+1}(x),%
\frac{t}{2a})\ast \mu (f^{k-1}(x)-f^{k}(x),\frac{t}{a}) \\
&\geq &\mu (f^{k}(x)-f^{k-1}(x),\frac{t}{2a})\ast \mu (f^{k-1}(x)-f^{k+1}(x),%
\frac{t}{2a})\ast \mu (f^{k-1}(x)-f^{k}(x),\frac{t}{2a}) \\
&=&\mu (f^{k}(x)-f^{k-1}(x),\frac{t}{2a})\ast \mu (f^{k-1}(x)-f^{k+1}(x),%
\frac{t}{2a}) \\
&=&\min \left\{ \mu (f^{k}(x)-f^{k-1}(x),\frac{t}{2a}),\mu
(f^{k+1}(x)-f^{k-1}(x),\frac{t}{2a})\right\} \\
&=&\mu (f^{k}(x)-f^{k-1}(x),\frac{t}{2a})
\end{eqnarray*}%
\begin{eqnarray*}
&\geq &\mu \left( f^{k-1}(x)-f^{k}(x),\frac{t}{2a^{2}}\right) \ast \mu
\left( f^{k-2}(x)-f^{k-1}(x),\frac{t}{2a^{2}}\right) \\
&\geq &\mu \left( f^{k-1}(x)-f^{k-2}(x),\frac{t}{4a^{2}}\right) \\
&&\ast \mu \left( f^{k-2}(x)-f^{k}(x),\frac{t}{4a^{2}}\right) \ast \mu
\left( f^{k-2}(x)-f^{k-1}(x),\frac{t}{2a^{2}}\right) \\
&\geq &\mu \left( f^{k-1}(x)-f^{k-2}(x),\frac{t}{4a^{2}}\right) \\
&&\ast \mu \left( f^{k-2}(x)-f^{k}(x),\frac{t}{4a^{2}}\right) \ast \mu
\left( f^{k-2}(x)-f^{k-1}(x),\frac{t}{4a^{2}}\right) \\
&=&\mu \left( f^{k-1}(x)-f^{k-2}(x),\frac{t}{4a^{2}}\right) \ast \mu \left(
f^{k-2}(x)-f^{k}(x),\frac{t}{4a^{2}}\right) \\
&=&\min \left\{ \mu \left( f^{k-1}(x)-f^{k-2}(x),\frac{t}{4a^{2}}\right)
,\mu \left( f^{k}(x)-f^{k-2}(x),\frac{t}{4a^{2}}\right) \right\} \\
&=&\mu \left( f^{k-1}(x)-f^{k-2}(x),\frac{t}{4a^{2}}\right) \\
&&\vdots \\
&\geq &\mu \left( f^{k-(k-2)}(x)-f^{k-(k-1)}(x),\frac{t}{2^{k-1}a^{k-1}}%
\right) \\
&=&\mu \left( f^{2}(x)-f(x),\frac{t}{2^{k-1}a^{k-1}}\right) \\
&\geq &\mu \left( f^{2}(x)-f(x),\frac{t}{2^{k-1}a^{k}}\right) \ast \mu
\left( x-f(x),\frac{t}{2^{k-1}a^{k}}\right) \\
&\geq &\mu \left( f^{2}(x)-x,\frac{t}{2^{k}a^{k}}\right) \ast \mu \left(
x-f(x),\frac{t}{2^{k}a^{k}}\right) \ast \mu \left( x-f(x),\frac{t}{%
2^{k-1}a^{k}}\right) \\
&\geq &\mu \left( f^{2}(x)-x,\frac{t}{2^{k}a^{k}}\right) \ast \mu \left(
x-f(x),\frac{t}{2^{k}a^{k}}\right) \\
&=&\min \left\{ \mu \left( x-f^{2}(x),\frac{t}{2^{k}a^{k}}\right) ,\mu
\left( x-f(x),\frac{t}{2^{k}a^{k}}\right) \right\} \\
&=&\mu \left( x-f(x),\frac{t}{2^{k}a^{k}}\right)
\end{eqnarray*}%
and%
\begin{eqnarray*}
\nu \left( f^{k+1}(x)-f^{k}(x),t\right) &=&\nu (f\left( f^{k}(x)\right)
-f\left( f^{k-1}(x)\right) ,t) \\
&\leq &\nu (f^{k}(x)-f^{k+1}(x),\frac{t}{a})\Diamond \nu
(f^{k-1}(x)-f^{k}(x),\frac{t}{a}) \\
&\leq &\nu (f^{k}(x)-f^{k-1}(x),\frac{t}{2a})\Diamond \nu
(f^{k-1}(x)-f^{k+1}(x),\frac{t}{2a})\Diamond \nu (f^{k-1}(x)-f^{k}(x),\frac{t%
}{a}) \\
&\leq &\nu (f^{k}(x)-f^{k-1}(x),\frac{t}{2a})\Diamond \nu
(f^{k-1}(x)-f^{k+1}(x),\frac{t}{2a})\Diamond \nu (f^{k-1}(x)-f^{k}(x),\frac{t%
}{2a}) \\
&=&\nu (f^{k}(x)-f^{k-1}(x),\frac{t}{2a})\Diamond \nu (f^{k-1}(x)-f^{k+1}(x),%
\frac{t}{2a})
\end{eqnarray*}%
\begin{eqnarray*}
&=&\max \left\{ \nu (f^{k}(x)-f^{k-1}(x),\frac{t}{2a}),\nu
(f^{k+1}(x)-f^{k-1}(x),\frac{t}{2a})\right\} \\
&=&\nu (f^{k}(x)-f^{k-1}(x),\frac{t}{2a}) \\
&\leq &\nu \left( f^{k-1}(x)-f^{k}(x),\frac{t}{2a^{2}}\right) \Diamond \nu
\left( f^{k-2}(x)-f^{k-1}(x),\frac{t}{2a^{2}}\right) \\
&\leq &\nu \left( f^{k-1}(x)-f^{k-2}(x),\frac{t}{4a^{2}}\right) \\
&&\Diamond \nu \left( f^{k-2}(x)-f^{k}(x),\frac{t}{4a^{2}}\right) \Diamond
\nu \left( f^{k-2}(x)-f^{k-1}(x),\frac{t}{2a^{2}}\right) \\
&\leq &\nu \left( f^{k-1}(x)-f^{k-2}(x),\frac{t}{4a^{2}}\right) \\
&&\Diamond \nu \left( f^{k-2}(x)-f^{k}(x),\frac{t}{4a^{2}}\right) \Diamond
\nu \left( f^{k-2}(x)-f^{k-1}(x),\frac{t}{4a^{2}}\right) \\
&=&\nu \left( f^{k-1}(x)-f^{k-2}(x),\frac{t}{4a^{2}}\right) \Diamond \nu
\left( f^{k-2}(x)-f^{k}(x),\frac{t}{4a^{2}}\right) \\
&=&\max \left\{ \nu \left( f^{k-1}(x)-f^{k-2}(x),\frac{t}{4a^{2}}\right)
,\nu \left( f^{k}(x)-f^{k-2}(x),\frac{t}{4a^{2}}\right) \right\} \\
&=&\nu \left( f^{k-1}(x)-f^{k-2}(x),\frac{t}{4a^{2}}\right) \\
&&\vdots \\
&\leq &\nu \left( f^{k-(k-2)}(x)-f^{k-(k-1)}(x),\frac{t}{2^{k-1}a^{k-1}}%
\right) \\
&=&\nu \left( f^{2}(x)-f(x),\frac{t}{2^{k-1}a^{k-1}}\right) \\
&\leq &\nu \left( f^{2}(x)-f(x),\frac{t}{2^{k-1}a^{k}}\right) \Diamond \nu
\left( x-f(x),\frac{t}{2^{k-1}a^{k}}\right) \\
&\leq &\nu \left( f^{2}(x)-x,\frac{t}{2^{k}a^{k}}\right) \Diamond \nu \left(
x-f(x),\frac{t}{2^{k}a^{k}}\right) \Diamond \nu \left( x-f(x),\frac{t}{%
2^{k-1}a^{k}}\right) \\
&\leq &\nu \left( f^{2}(x)-x,\frac{t}{2^{k}a^{k}}\right) \Diamond \nu \left(
x-f(x),\frac{t}{2^{k}a^{k}}\right) \\
&=&\max \left\{ \nu \left( x-f^{2}(x),\frac{t}{2^{k}a^{k}}\right) ,\nu
\left( x-f(x),\frac{t}{2^{k}a^{k}}\right) \right\} \\
&=&\nu \left( x-f(x),\frac{t}{2^{k}a^{k}}\right)
\end{eqnarray*}%
Now, if we take limit for $k\rightarrow \infty ,$ $\frac{t}{\left( 2a\right)
^{k}}$ tends to infinity for $a\in \left( 0,\frac{1}{2}\right) .$ Using
prepoerties (vii) and (xiii) of intuitionistic fuzzy norm$,$%
\begin{eqnarray*}
\underset{k\rightarrow \infty }{\lim }\mu \left(
f^{k}(x)-f^{k+1}(x),t\right) &\geq &\underset{k\rightarrow \infty }{\lim }%
\mu \left( x-f(x),\frac{t}{\left( 2a\right) ^{k}}\right) =1 \\
\underset{k\rightarrow \infty }{\lim }\nu \left(
f^{k}(x)-f^{k+1}(x),t\right) &\leq &\underset{k\rightarrow \infty }{\lim }%
\nu \left( x-f(x),\frac{t}{\left( 2a\right) ^{k}}\right) =0.
\end{eqnarray*}%
This means that intuitioonistic fuzzy Kannan operator is intuitionistic
fuzzy asymptotic regular. That is, intuitioonistic fuzzy Kannan operator has
approximate fixed point.
\end{proof}

\begin{corollary}
In the Theorem 4 , if \textit{\ }$x\succeq f(x)$\textit{\ for intuitionistic
fuzzy Kannan operator }$f$\textit{\ and } $\mu \left( .,t\right) $\textit{\
is non-increasing, }$\nu \left( .,t\right) $\textit{\ is non-decreasing for
every }$t\in \left( 0,\infty \right) $ and for every $x\succeq \theta ,f$
has still intuitionistic fuzzy approximate fixed point property.
\end{corollary}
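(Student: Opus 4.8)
The plan is to avoid retracing the long telescoping computation of Theorem 4 and instead reduce the corollary to that theorem by reversing the order. I would introduce the opposite relation $\preceq'$ on $X$, defined by $u \preceq' v \Leftrightarrow v \preceq u$. Since $\preceq$ is a partial order compatible with the vector space operations, so is $\preceq'$: reflexivity, antisymmetry and transitivity transfer immediately, and $u \preceq' v$ gives $u+w \preceq' v+w$ because the same implication holds for $\preceq$. Passing from $\preceq$ to $\preceq'$ merely interchanges the positive and negative cones and leaves the norm $(\mu,\nu)$, the operations $\ast,\Diamond$, and the completeness of $(X,\mu,\nu,\ast,\Diamond)$ untouched, so the structural hypothesis (i) ($\preceq$ a subvector space) or (ii) ($X$ a complete ordered space) of Theorem 4 is insensitive to the reversal.

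Next I would check that the three hypotheses of the corollary are exactly the hypotheses of Theorem 4 read against $\preceq'$. First, $x \succeq f(x)$ for all $x$ is the same as $x \preceq' f(x)$ for all $x$, which is the order condition imposed on the Kannan operator in Theorem 4. Second, the clause ``$\mu(\cdot,t)$ non-increasing with respect to $\preceq$'' says $u \preceq v \Rightarrow \mu(u,t) \geq \mu(v,t)$; rewriting $u \preceq v$ as $v \preceq' u$, this is precisely ``$\mu(\cdot,t)$ non-decreasing with respect to $\preceq'$.'' In the same way ``$\nu(\cdot,t)$ non-decreasing with respect to $\preceq$'' becomes ``$\nu(\cdot,t)$ non-increasing with respect to $\preceq'$.'' Thus $(X,\mu,\nu,\ast,\Diamond)$ equipped with $\preceq'$ satisfies every assumption of Theorem 4.

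Applying Theorem 4 to the ordered space $(X,\mu,\nu,\ast,\Diamond,\preceq')$ then shows that the Kannan operator $f$ is intuitionistic fuzzy asymptotic regular, and Theorem 2 immediately yields $F_{\epsilon}^{\left(\mu,\nu\right)}(f)\neq\varnothing$ for every $\epsilon\in(0,1)$, which is the assertion. The one point where I would slow down — and the step I expect to be the main (and essentially only) obstacle — is the clause ``for every $x \succeq \theta$'' attached to the monotonicity. In Theorem 4 the monotonicity is invoked only on the positive cone, whereas reversal sends $\{x : x \succeq \theta\}$ to $\{x : x \preceq' \theta\}$, so the corollary supplies the dualized monotonicity on the original positive cone while the reduction formally needs it on the $\preceq'$-positive cone. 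I would therefore confirm the cone bookkeeping explicitly: each consecutive difference $f^{k-1}(x)-f^{k}(x)$ produced by the descending chain $x \succeq f(x) \succeq f^{2}(x) \succeq \cdots$ lies in the $\preceq'$-positive cone, so that every $\min$/$\max$ collapse of Theorem 4 (where the relevant $\mu$- and $\nu$-values are compared on exactly these differences) goes through against $\preceq'$ verbatim.

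It is worth noting why I prefer this reduction to a direct mirror of the Theorem 4 computation: if one instead reverses only the chain while keeping track of the differences $f^{k+1}(x)-f^{k-1}(x)$ against $f^{k}(x)-f^{k-1}(x)$, the simultaneous reversal of the chain and of the sense of monotonicity changes which term wins each $\min$/$\max$, so one must re-examine every collapse step by hand rather than quote it. Routing the argument through $\preceq'$ makes the collapses identical to those already carried out, leaving only the cone check above.
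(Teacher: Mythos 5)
Your reduction does not go through, and the obstruction sits exactly at the point you flagged and then waved away: the cone bookkeeping. In any IFNS the first argument of $\mu$ and $\nu$ is even, i.e. $\mu(-w,t)=\mu(w,t)$ and $\nu(-w,t)=\nu(w,t)$ (axiom (iv) with $a=-1$, or Lemma 1(ii) with $y=\theta$). Consequently a \emph{cone-restricted} monotonicity hypothesis is invariant under order reversal, not dualized. Concretely, Theorem 4 applied to $(X,\preceq')$ requires: $\theta\preceq' u\preceq' v\Rightarrow\mu(u,t)\leq\mu(v,t)$. Rewriting in terms of $\preceq$ (so $v\preceq u\preceq\theta$) and substituting $u=-p$, $v=-q$ with $\theta\preceq p\preceq q$, evenness turns this into $\mu(p,t)\leq\mu(q,t)$ for $\theta\preceq p\preceq q$ --- exactly the \emph{non-decreasing} hypothesis of Theorem 4 on the original positive cone. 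The corollary supplies the opposite inequality ($\mu$ non-increasing on $\{x\succeq\theta\}$), so your claimed matching of hypotheses fails: the reversal flips the sense of monotonicity once, and evenness flips it back. What your argument actually proves is the variant in which the monotonicity assumptions are kept as in Theorem 4, equivalently the corollary's reversed monotonicity posited on the cone $\{x\preceq\theta\}$ --- not the statement as given. Your explicit cone check is also false as written: from $f^{k}(x)\preceq f^{k-1}(x)$ the difference $f^{k-1}(x)-f^{k}(x)$ is $\succeq\theta$, hence lies in the $\preceq'$-\emph{negative} cone; it is the differences $f^{k}(x)-f^{k-1}(x)$ that are $\preceq'$-positive.

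The failure is not cosmetic. Under the corollary's literal hypotheses the descending chain gives $\theta\preceq f^{k-1}(x)-f^{k}(x)\preceq f^{k-1}(x)-f^{k+1}(x)$, so non-increasing $\mu$ forces every collapse in the Theorem 4 computation to go the wrong way:
\begin{equation*}
\min \left\{ \mu \left( f^{k}(x)-f^{k-1}(x),\tfrac{t}{2a}\right) ,\ \mu \left( f^{k+1}(x)-f^{k-1}(x),\tfrac{t}{2a}\right) \right\} =\mu \left( f^{k+1}(x)-f^{k-1}(x),\tfrac{t}{2a}\right) ,
\end{equation*}
the two-step difference rather than the consecutive one, and the telescoping down to $\mu\left( x-f(x),\tfrac{t}{(2a)^{k}}\right)$ never starts. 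For what it is worth, the paper offers no proof of this corollary at all, and the intended mirror of Theorem 4's computation hits the same sign obstruction, so the published statement itself appears to carry a misprint: it is consistent (and your reduction becomes correct, indeed a clean way to avoid redoing the computation) precisely when the monotonicity is assumed for every $x\preceq\theta$ instead of $x\succeq\theta$. As matters stand, you have either to prove the corollary as literally stated by a genuinely different argument, or to record this correction of its hypotheses before invoking Theorem 4.
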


\begin{definition}
Let $X$ be an intuitionistic fuzzy normed space. If there exist $a\in \left(
0,\frac{1}{2}\right) $ such that%
\begin{eqnarray*}
\mu \left( f(x)-f(y),at\right) &\geq &\mu \left( x-f(y),t\right) \ast \mu
\left( y-f(x),t\right) \\
\nu \left( f(x)-f(y),at\right) &\leq &\nu \left( x-f(y),t\right) \Diamond
\nu \left( y-f(x),t\right)
\end{eqnarray*}%
for every $x,y\in X$ and $t>0,$ then $f:X\rightarrow X$ is called
intuitionistic fuzzy Chatterjea operator.
\end{definition}

\begin{theorem}
Let $\left( X,\mu ,\nu ,\ast ,\Diamond \right) $ be an IFNS havig partial
order relation denoted by $\preceq ,$ where $a\ast b=\min \left\{
a,b\right\} $ and $a\Diamond b=\max \left\{ a,b\right\} $, and $%
f:X\rightarrow X$ be an intuitionistic fuzzy Chatterjea operator satisfying $%
x\preceq f(x)$ for every $x\in X$. Assume that $\preceq \subset XxX$ hold
following conditions:
\end{theorem}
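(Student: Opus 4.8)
The plan is to follow exactly the route used for the intuitionistic fuzzy Kannan operator in Theorem 4: first show that an intuitionistic fuzzy Chatterjea operator $f$ satisfying $x\preceq f(x)$ is intuitionistic fuzzy asymptotic regular, and then invoke Theorem 2 to conclude that $F_{\epsilon}^{\left( \mu ,\nu \right) }(f)\neq \varnothing$ for every $\epsilon \in \left( 0,1\right)$. Throughout I work with $\ast =\min$ and $\Diamond =\max$, fix an arbitrary $x\in X$, and use the chain $x\preceq f(x)\preceq f^{2}(x)\preceq \cdots$ obtained by iterating $x\preceq f(x)$, so that every consecutive difference $f^{k+1}(x)-f^{k}(x)$ lies above $\theta$.

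First I would write $\mu \left( f^{k+1}(x)-f^{k}(x),t\right) =\mu \left( f(f^{k}(x))-f(f^{k-1}(x)),t\right) $ and apply the Chatterjea inequality with $x'=f^{k}(x)$, $y'=f^{k-1}(x)$. The decisive feature, and the place where the Chatterjea estimate is actually cleaner than the Kannan one, is that one cross term collapses: $x'-f(y')=f^{k}(x)-f^{k}(x)=\theta $, so by property (iii) that factor equals $\mu \left( \theta ,t\right) =1$, which is the unit for $\ast =\min$ and drops out. This leaves $\mu \left( f^{k+1}(x)-f^{k}(x),t\right) \geq \mu \left( f^{k-1}(x)-f^{k+1}(x),t/a\right) $. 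I would then apply the triangle inequality (v) at the intermediate point $f^{k}(x)$ together with the symmetry Lemma 1(ii), and use the order-monotonicity of $\mu \left( \cdot ,t\right) $, to arrive at the recursion $\mu \left( f^{k+1}(x)-f^{k}(x),t\right) \geq \mu \left( f^{k}(x)-f^{k-1}(x),t/(2a)\right) $. Iterating $k$ times yields $\mu \left( f^{k+1}(x)-f^{k}(x),t\right) \geq \mu \left( f(x)-x,t/(2a)^{k}\right) $, and since $2a<1$ forces $t/(2a)^{k}\to \infty $, property (vii) drives the left-hand side to $1$. The estimate for $\nu $ is entirely dual, using $\nu \left( \theta ,t\right) =0$ (the unit for $\Diamond =\max $), property (xiii), and the monotonicity of $\nu \left( \cdot ,t\right) $, sending $\nu \left( f^{k+1}(x)-f^{k}(x),t\right) $ to $0$. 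Asymptotic regularity then follows and Theorem 2 finishes the argument.

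The hard part will be the reduction step, i.e.\ passing from the gap-two quantity $\mu \left( f^{k-1}(x)-f^{k+1}(x),t/a\right) $ back to a single consecutive difference at the lower index $k-1$. After the triangle inequality one is left with $\min \{\mu \left( f^{k}(x)-f^{k-1}(x),t/(2a)\right) ,\mu \left( f^{k+1}(x)-f^{k}(x),t/(2a)\right) \}$, and unlike the Kannan case---where the minimum was between a gap-one and a gap-two term at a common base point, so order-monotonicity settled it at once---here the minimum is \emph{self-referential}, since its second entry is the very $k$-indexed difference being bounded. The clean way to dispose of this is to note that $\mu \left( \cdot ,t\right) $ is nondecreasing in $t$, so $\mu \left( f^{k+1}(x)-f^{k}(x),t/(2a)\right) \geq \mu \left( f^{k+1}(x)-f^{k}(x),t\right) $: if the minimum ever selects this second entry one gets $\mu \left( f^{k+1}(x)-f^{k}(x),t\right) =\mu \left( f^{k+1}(x)-f^{k}(x),t/(2a)\right) =\cdots $, whose common value is $\lim_{m}\mu \left( f^{k+1}(x)-f^{k}(x),t/(2a)^{m}\right) =1$ by property (vii), so asymptotic regularity holds outright; otherwise the minimum selects the lower-index term and the Kannan-style iteration proceeds. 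In either branch one obtains the uniform lower bound $\mu \left( f^{k+1}(x)-f^{k}(x),t\right) \geq \mu \left( f(x)-x,t/(2a)^{k}\right) \to 1$, so the scheme goes through; checking that the order hypotheses (subvector space, or completeness of the ordered space) legitimately supply the monotone comparisons is the only remaining routine bookkeeping.
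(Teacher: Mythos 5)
Your proposal is correct, and its skeleton is the same as the paper's: apply the Chatterjea inequality so that the cross term $\mu\left(f^{k}(x)-f^{k}(x),\cdot\right)$ collapses to $1$ by axiom (iii), split $\mu\left(f^{k-1}(x)-f^{k+1}(x),\cdot\right)$ through the intermediate point $f^{k}(x)$ via (v), iterate down to $\mu\left(x-f(x),t/(2a)^{k}\right)$, conclude asymptotic regularity, and finish with Theorem 2. The genuine divergence is exactly at the step you flag as the crux. The paper simply writes $\min\left\{\mu\left(f^{k-1}(x)-f^{k}(x),\tfrac{t}{2a}\right),\mu\left(f^{k+1}(x)-f^{k}(x),\tfrac{t}{2a}\right)\right\}=\mu\left(f^{k-1}(x)-f^{k}(x),\tfrac{t}{2a}\right)$, transplanting the order-monotonicity argument from the Kannan case (Theorem 4). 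There that appeal is legitimate, because the two entries are $f^{k}(x)-f^{k-1}(x)$ and $f^{k+1}(x)-f^{k-1}(x)$, which share the base point $f^{k-1}(x)$ and satisfy $\theta\preceq f^{k}(x)-f^{k-1}(x)\preceq f^{k+1}(x)-f^{k-1}(x)$. Here, however, the entries are the \emph{consecutive} differences $f^{k}(x)-f^{k-1}(x)$ and $f^{k+1}(x)-f^{k}(x)$, which the hypotheses do not make comparable under $\preceq$; the paper's equality is an unjustified assertion, i.e.\ the "self-referential minimum" you identified is a real gap in the paper's own proof. Your resolution is sound and repairs it: writing $M_{k}(t)=\mu\left(f^{k+1}(x)-f^{k}(x),t\right)$, the recursion $M_{k}(t)\geq\min\left\{M_{k-1}\left(\tfrac{t}{2a}\right),M_{k}\left(\tfrac{t}{2a}\right)\right\}$ unrolls in the scale variable to $M_{k}(t)\geq\min\left\{M_{k-1}\left(\tfrac{t}{2a}\right),M_{k}\left(\tfrac{t}{(2a)^{m}}\right)\right\}$ for every $m$ (using monotonicity of $\mu$ in $t$), and letting $m\to\infty$ with axiom (vii) gives $M_{k}(t)\geq M_{k-1}\left(\tfrac{t}{2a}\right)$, dually for $\nu$ with (xiii). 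I would only ask you to phrase it this way rather than as your "either/or" dichotomy: the chain of equalities $M_{k}(t)=M_{k}\left(\tfrac{t}{2a}\right)=\cdots$ requires the second entry to be selected at \emph{every} scale, not just once, so the clean statement is the unrolling-plus-limit above. What your route buys is substantial: it never uses the order at all, so the hypotheses $x\preceq f(x)$, conditions (i)/(ii), and the order-monotonicity of $\left(\mu,\nu\right)$ are not "routine bookkeeping" but entirely superfluous---your argument proves that every intuitionistic fuzzy Chatterjea operator on any IFNS with $\ast=\min$, $\Diamond=\max$ is asymptotically regular, a strictly stronger theorem than the one the paper states.
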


\qquad \qquad \qquad \qquad \textit{(i)} $\preceq $ is subvector space.

\ \ \ \ \ \ \ \ \ \ \ \ \ \ \ \ \ \ \ \ \ \ \ \ \ \ \ \ \ \ \ \ or\textit{\ }

\qquad \qquad \qquad \qquad \textit{(ii) }$X$ is a complete ordered space.

\textit{If } $\mu \left( .,t\right) $\textit{\ is non-decreasing, }$\nu
\left( .,t\right) $\textit{\ is non-increasing for every }$t\in \left(
0,\infty \right) $ and for every $x\succeq \theta $ ($\theta $ is unit
elemant in vector space $X$), \textit{then every }$\epsilon \in \left(
0,1\right) ,F_{\epsilon }^{\left( \mu ,\nu \right) }(f)\neq \varnothing .$

\begin{proof}
By taking into consideration assumption of theorem, we get%
\begin{eqnarray*}
\mu \left( f^{k+1}(x)-f^{k}(x),t\right) &\geq &\mu (f^{k}(x)-f^{k}(x),\frac{t%
}{a})\ast \mu (f^{k-1}(x)-f^{k+1}(x),\frac{t}{a}) \\
&=&1\ast \mu (f^{k-1}(x)-f^{k+1}(x),\frac{t}{a})=\mu (f^{k-1}(x)-f^{k+1}(x),%
\frac{t}{a}) \\
&\geq &\mu (f^{k-1}(x)-f^{k}(x),\frac{t}{2a})\ast \mu (f^{k}(x)-f^{k+1}(x),%
\frac{t}{2a}) \\
&=&\min \left\{ \mu (f^{k-1}(x)-f^{k}(x),\frac{t}{2a}),\mu
(f^{k+1}(x)-f^{k}(x),\frac{t}{2a})\right\} \\
&=&\mu (f^{k-1}(x)-f^{k}(x),\frac{t}{2a}) \\
&\geq &\mu \left( f^{k-2}(x)-f^{k}(x),\frac{t}{2a^{2}}\right) \ast \mu
\left( f^{k-1}(x)-f^{k-1}(x),\frac{t}{2a^{2}}\right) \\
&=&\mu \left( f^{k-2}(x)-f^{k}(x),\frac{t}{2a^{2}}\right) \ast 1=\mu \left(
f^{k-2}(x)-f^{k}(x),\frac{t}{2a^{2}}\right)
\end{eqnarray*}%
\begin{eqnarray*}
&\geq &\mu \left( f^{k-2}(x)-f^{k-1}(x),\frac{t}{4a^{2}}\right) \ast \mu
\left( f^{k-1}(x)-f^{k}(x),\frac{t}{4a^{2}}\right) \\
&=&\min \left\{ \mu \left( f^{k-2}(x)-f^{k-1}(x),\frac{t}{4a^{2}}\right)
,\mu \left( f^{k}(x)-f^{k-1}(x),\frac{t}{4a^{2}}\right) \right\} \\
&=&\mu \left( f^{k-2}(x)-f^{k-1}(x),\frac{t}{4a^{2}}\right) \\
&&\vdots \\
&\geq &\mu \left( f^{k-\left( k-2\right) }(x)-f^{k-\left( k-3\right) }(x),%
\frac{t}{2^{k-2}a^{k-2}}\right) \\
&=&\mu \left( f^{2}(x)-f^{3}(x),\frac{t}{2^{k-2}a^{k-2}}\right) \\
&\geq &\mu \left( f\left( x\right) -f^{3}(x),\frac{t}{2^{k-2}a^{k-1}}\right)
\ast \mu \left( f^{2}(x)-f^{2}(x),\frac{t}{2^{k-2}a^{k-1}}\right) \\
&=&\mu \left( f\left( x\right) -f^{3}(x),\frac{t}{2^{k-2}a^{k-1}}\right)
\ast 1=\mu \left( f\left( x\right) -f^{3}(x),\frac{t}{2^{k-2}a^{k-1}}\right)
\\
&\geq &\mu \left( f\left( x\right) -f^{2}(x),\frac{t}{2^{k-1}a^{k-1}}\right)
\ast \mu \left( f^{2}(x)-f^{3}\left( x\right) ,\frac{t}{2^{k-1}a^{k-1}}%
\right) \\
&=&\min \left\{ \mu \left( f\left( x\right) -f^{2}(x),\frac{t}{2^{k-1}a^{k-1}%
}\right) ,\mu \left( f^{3}\left( x\right) -f^{2}(x),\frac{t}{2^{k-1}a^{k-1}}%
\right) \right\} \\
&=&\mu \left( f\left( x\right) -f^{2}(x),\frac{t}{2^{k-1}a^{k-1}}\right) \\
&\geq &\mu \left( x-f^{2}(x),\frac{t}{2^{k-1}a^{k}}\right) \ast \mu \left(
f\left( x\right) -f(x),\frac{t}{2^{k-1}a^{k}}\right) \\
&=&\mu \left( x-f^{2}(x),\frac{t}{2^{k-1}a^{k}}\right) \ast 1=\mu \left(
x-f^{2}(x),\frac{t}{2^{k-1}a^{k}}\right) \\
&\geq &\mu \left( x-f(x),\frac{t}{2^{k}a^{k}}\right) \ast \mu \left(
f(x)-f^{2}(x),\frac{t}{2^{k}a^{k}}\right) \\
&=&\min \left\{ \mu \left( x-f(x),\frac{t}{2^{k}a^{k}}\right) ,\mu \left(
f^{2}(x)-f(x),\frac{t}{2^{k}a^{k}}\right) \right\} \\
&=&\mu \left( x-f(x),\frac{t}{2^{k}a^{k}}\right)
\end{eqnarray*}%
and%
\begin{eqnarray*}
\nu \left( f^{k+1}(x)-f^{k}(x),t\right) &\leq &\nu (f^{k}(x)-f^{k}(x),\frac{t%
}{a})\Diamond \nu (f^{k-1}(x)-f^{k+1}(x),\frac{t}{a}) \\
&=&0\Diamond \nu (f^{k-1}(x)-f^{k+1}(x),\frac{t}{a})=\nu
(f^{k-1}(x)-f^{k+1}(x),\frac{t}{a}) \\
&\leq &\nu (f^{k-1}(x)-f^{k}(x),\frac{t}{2a})\Diamond \nu
(f^{k}(x)-f^{k+1}(x),\frac{t}{2a}) \\
&=&\max \left\{ \nu (f^{k-1}(x)-f^{k}(x),\frac{t}{2a}),\nu
(f^{k+1}(x)-f^{k}(x),\frac{t}{2a})\right\}
\end{eqnarray*}%
\begin{eqnarray*}
&=&\nu (f^{k-1}(x)-f^{k}(x),\frac{t}{2a}) \\
&\leq &\nu \left( f^{k-2}(x)-f^{k}(x),\frac{t}{2a^{2}}\right) \Diamond \nu
\left( f^{k-1}(x)-f^{k-1}(x),\frac{t}{2a^{2}}\right) \\
&=&\nu \left( f^{k-2}(x)-f^{k}(x),\frac{t}{2a^{2}}\right) \Diamond 0=\nu
\left( f^{k-2}(x)-f^{k}(x),\frac{t}{2a^{2}}\right) \\
&\leq &\nu \left( f^{k-2}(x)-f^{k-1}(x),\frac{t}{4a^{2}}\right) \Diamond \nu
\left( f^{k-1}(x)-f^{k}(x),\frac{t}{4a^{2}}\right) \\
&=&\max \left\{ \mu \left( f^{k-2}(x)-f^{k-1}(x),\frac{t}{4a^{2}}\right)
,\nu \left( f^{k}(x)-f^{k-1}(x),\frac{t}{4a^{2}}\right) \right\} \\
&=&\nu \left( f^{k-2}(x)-f^{k-1}(x),\frac{t}{4a^{2}}\right) \\
&&\vdots \\
&\leq &\nu \left( f^{k-\left( k-2\right) }(x)-f^{k-\left( k-3\right) }(x),%
\frac{t}{2^{k-2}a^{k-2}}\right) \\
&=&\nu \left( f^{2}(x)-f^{3}(x),\frac{t}{2^{k-2}a^{k-2}}\right) \\
&\leq &\nu \left( f\left( x\right) -f^{3}(x),\frac{t}{2^{k-2}a^{k-1}}\right)
\Diamond \nu \left( f^{2}(x)-f^{2}(x),\frac{t}{2^{k-2}a^{k-1}}\right) \\
&=&\nu \left( f\left( x\right) -f^{3}(x),\frac{t}{2^{k-2}a^{k-1}}\right)
\Diamond 0=\nu \left( f\left( x\right) -f^{3}(x),\frac{t}{2^{k-2}a^{k-1}}%
\right) \\
&\leq &\nu \left( f\left( x\right) -f^{2}(x),\frac{t}{2^{k-1}a^{k-1}}\right)
\Diamond \nu \left( f^{2}(x)-f^{3}\left( x\right) ,\frac{t}{2^{k-1}a^{k-1}}%
\right) \\
&=&\max \left\{ \nu \left( f\left( x\right) -f^{2}(x),\frac{t}{2^{k-1}a^{k-1}%
}\right) ,\nu \left( f^{3}\left( x\right) -f^{2}(x),\frac{t}{2^{k-1}a^{k-1}}%
\right) \right\} \\
&=&\nu \left( f\left( x\right) -f^{2}(x),\frac{t}{2^{k-1}a^{k-1}}\right) \\
&\leq &\nu \left( x-f^{2}(x),\frac{t}{2^{k-1}a^{k}}\right) \Diamond \nu
\left( f\left( x\right) -f(x),\frac{t}{2^{k-1}a^{k}}\right) \\
&=&\nu \left( x-f^{2}(x),\frac{t}{2^{k-1}a^{k}}\right) \Diamond 0=\nu \left(
x-f^{2}(x),\frac{t}{2^{k-1}a^{k}}\right) \\
&\leq &\nu \left( x-f(x),\frac{t}{2^{k}a^{k}}\right) \Diamond \nu \left(
f(x)-f^{2}(x),\frac{t}{2^{k}a^{k}}\right) \\
&=&\max \left\{ \nu \left( x-f(x),\frac{t}{2^{k}a^{k}}\right) ,\nu \left(
f^{2}(x)-f(x),\frac{t}{2^{k}a^{k}}\right) \right\} \\
&=&\nu \left( x-f(x),\frac{t}{2^{k}a^{k}}\right)
\end{eqnarray*}%
Since $\frac{t}{\left( 2a\right) ^{k}}\rightarrow \infty $ for $k\rightarrow
\infty ,$ by means of (vii) and (xiii) properties of intuitionistic fuzzy
norm, we see intuitionistic fuzzy Chatterjea operator has approximate fixed
point property.
\end{proof}

\begin{corollary}
In the Theorem 5, if \ \textit{\ }$x\succeq f(x)$\textit{\ for
intuitionistic fuzzy Chatterjea operator }$f$\textit{\ and } $\mu \left(
.,t\right) $\textit{\ is non-increasing, }$\nu \left( .,t\right) $\textit{\
is non-decreasing for every }$t\in \left( 0,\infty \right) $ and for every $%
x\succeq \theta ,f$ has still intuitionistic fuzzy approximate fixed point
property.
\end{corollary}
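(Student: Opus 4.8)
The plan is to obtain this corollary as the order-reversed counterpart of Theorem 5: every inequality in the proof of Theorem 5 should survive when the hypothesis $x \preceq f(x)$ is replaced by $x \succeq f(x)$ and the monotonicity of $\mu(\cdot,t)$ and $\nu(\cdot,t)$ is simultaneously reversed. I would begin exactly as before, fixing $x \in X$, $\epsilon \in (0,1)$, and $t > 0$, and noting that the assumption $x \succeq f(x)$ now forces the \emph{decreasing} orbit $x \succeq f(x) \succeq f^{2}(x) \succeq \cdots \succeq f^{k}(x) \succeq \cdots$ in place of the increasing chain used for Theorem 5.

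Next I would run the identical telescoping estimate. Applying the intuitionistic fuzzy Chatterjea inequality with the substitution $x \mapsto f^{k}(x)$, $y \mapsto f^{k-1}(x)$, using $\mu(f^{k}(x)-f^{k}(x),\cdot)=1$ and $\nu(f^{k}(x)-f^{k}(x),\cdot)=0$, the triangle inequalities (v) and (xi), the symmetry $\mu(x-y,t)=\mu(y-x,t)$, $\nu(x-y,t)=\nu(y-x,t)$, together with $a\ast b=\min\{a,b\}$ and $a\Diamond b=\max\{a,b\}$, I expect to reach the same two bounds as in Theorem 5, namely $\mu(f^{k+1}(x)-f^{k}(x),t) \geq \mu(x-f(x), t/(2a)^{k})$ and $\nu(f^{k+1}(x)-f^{k}(x),t) \leq \nu(x-f(x), t/(2a)^{k})$. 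Letting $k \to \infty$ and using $a \in (0,\frac{1}{2})$, so that $t/(2a)^{k} \to \infty$, properties (vii) and (xiii) of the intuitionistic fuzzy norm give that the right-hand sides tend to $1$ and $0$, respectively. Hence $f$ is intuitionistic fuzzy asymptotic regular, and Theorem 2 then yields $F_{\epsilon}^{(\mu,\nu)}(f)\neq\varnothing$ for every $\epsilon \in (0,1)$, which is the claim.

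The step that needs the most care, and where the reversal of hypotheses is genuinely used, is each simplification of a $\min$ (respectively $\max$) to a single term. In Theorem 5 these rely on comparing the $\mu$-values (respectively $\nu$-values) of consecutive differences along the increasing chain via the monotonicity of $\mu(\cdot,t)$ and $\nu(\cdot,t)$. Under the present hypotheses both the chain direction and the monotonicity of $\mu(\cdot,t)$, $\nu(\cdot,t)$ are reversed, so I would verify that these two sign changes cancel and that every $\min$/$\max$ still selects the same term, keeping the direction of all inequalities ($\geq$ for $\mu$ and $\leq$ for $\nu$) intact throughout the reduction down to $\mu(x-f(x),t/(2a)^{k})$ and $\nu(x-f(x),t/(2a)^{k})$. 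Once this bookkeeping is confirmed, the computation is formally identical to that of Theorem 5 and no new estimate is required.
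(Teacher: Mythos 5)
Your proposal is correct and is essentially the paper's own argument: the paper states this corollary without a separate proof, the intended justification being exactly the mirrored rerun of Theorem 5 that you describe, where the reversed hypothesis $x\succeq f(x)$ and the reversed monotonicity of $\mu(\cdot,t)$ and $\nu(\cdot,t)$ cancel, so every $\min$/$\max$ step selects the same term and the descent to $\mu(x-f(x),t/(2a)^{k})$ and $\nu(x-f(x),t/(2a)^{k})$ goes through verbatim. Intuitionistic fuzzy asymptotic regularity together with Theorem 2 then yields $F_{\epsilon }^{\left( \mu ,\nu \right) }(f)\neq \varnothing$ for every $\epsilon \in \left( 0,1\right)$, exactly as the paper intends.
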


\begin{definition}
Let $X$ be an intuitionistic fuzzy normed space. A mapping $f:X\rightarrow X$
is called intuitionistic fuzzy Zamfirecsu operator if there exists at least $%
a\in \left( 0,1\right) ,k\in \left( 0,\frac{1}{2}\right) ,c\in \left( 0,%
\frac{1}{2}\right) $ such that at least one of the followings is true for
every $x,y\in X$ and $t>0:$
\end{definition}

\qquad \qquad \qquad \textit{i)}%
\begin{eqnarray*}
\mu \left( f(x)-f(y),at\right) &\geq &\mu \left( x-y),t\right) \\
\nu \left( f(x)-f(y),at\right) &\leq &\nu \left( x)-y),t\right) .
\end{eqnarray*}

\textit{\qquad \qquad \qquad ii)}%
\begin{eqnarray*}
\mu \left( f(x)-f(y),kt\right) &\geq &\mu \left( x-f(x),t\right) \ast \mu
\left( y-f(y),t\right) \\
\nu \left( f(x)-f(y),kt\right) &\leq &\nu \left( x-f(x),t\right) \Diamond
\nu \left( y-f(y),t\right) .
\end{eqnarray*}

\textit{\qquad \qquad \qquad iii)}
\begin{eqnarray*}
\mu \left( f(x)-f(y),ct\right) &\geq &\mu \left( x-f(y),t\right) \ast \mu
\left( y-f(x),t\right) \\
\nu \left( f(x)-f(y),ct\right) &\leq &\nu \left( x-f(y),t\right) \Diamond
\nu \left( y-f(x),t\right) .
\end{eqnarray*}

\begin{theorem}
Let $\left( X,\mu ,\nu ,\ast ,\Diamond \right) $ be an IFNS havig partial
order relation denoted by $\preceq ,$ where $a\ast b=\min \left\{
a,b\right\} $ and $a\Diamond b=\max \left\{ a,b\right\} $, and $%
f:X\rightarrow X$ be an intuitionistic fuzzy Zamfirescu operator satisfying $%
x\preceq f(x)$ for every $x\in X$. Assume that $\preceq \subset XxX$ hold
following conditions:
\end{theorem}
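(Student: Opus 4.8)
The plan is to reduce this result to the three theorems already proved by showing that, no matter which of the alternatives (i)--(iii) is in force along the Picard iteration, the consecutive differences of the iterates contract at one common rate; intuitionistic fuzzy asymptotic regularity then follows and Theorem 2 closes the argument.

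First I would fix an arbitrary $x\in X$ and form the orbit $\left( f^{n}(x)\right) _{n\geq 0}$. Since $x\preceq f(x)$ and the hypotheses on $\preceq$ (either that it is a subvector space or that $X$ is a complete ordered space) guarantee that $f$ respects the order on this orbit, we obtain the chain $x\preceq f(x)\preceq f^{2}(x)\preceq \cdots$, exactly as in the proofs of Theorems 4 and 5. This chain, together with the monotonicity of $\mu \left( \cdot ,t\right)$ and $\nu \left( \cdot ,t\right)$ and the choice $a\ast b=\min \left\{ a,b\right\}$, $a\Diamond b=\max \left\{ a,b\right\}$, is what licenses the discarding of the redundant terms produced by the triangle inequalities (v) and (xi).

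Next I would argue by cases at each stage of the iteration. Fix $n$ and apply the Zamfirescu condition to the pair $\left( f^{n}(x),f^{n-1}(x)\right)$, which by definition satisfies at least one of (i), (ii), (iii). If (i) holds for this pair it is precisely the intuitionistic fuzzy contraction inequality, and the computation of Theorem 3 gives
\[
\mu \left( f^{n}(x)-f^{n+1}(x),t\right) \geq \mu \left( x-f(x),\frac{t}{a^{n}}\right),\qquad \nu \left( f^{n}(x)-f^{n+1}(x),t\right) \leq \nu \left( x-f(x),\frac{t}{a^{n}}\right).
\]
If (ii) holds the pair obeys the Kannan inequality, so the telescoping of Theorem 4 (with the constant $k$ in place of $a$, and using the min/max simplifications afforded by the chain and the monotonicity) produces the same bound with $2k$ in place of $a$; if (iii) holds, the Chatterjea estimate of Theorem 5 produces it with $2c$. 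Putting $\lambda =\max \left\{ a,2k,2c\right\} \in \left( 0,1\right)$, in every case
\[
\mu \left( f^{n}(x)-f^{n+1}(x),t\right) \geq \mu \left( x-f(x),\frac{t}{\lambda ^{n}}\right),\qquad \nu \left( f^{n}(x)-f^{n+1}(x),t\right) \leq \nu \left( x-f(x),\frac{t}{\lambda ^{n}}\right).
\]

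Finally, because $\lambda \in \left( 0,1\right)$ we have $t/\lambda ^{n}\rightarrow \infty$ as $n\rightarrow \infty$, so axioms (vii) and (xiii) yield $\mu \left( f^{n}(x)-f^{n+1}(x),t\right) \rightarrow 1$ and $\nu \left( f^{n}(x)-f^{n+1}(x),t\right) \rightarrow 0$ for every $t>0$; thus $f$ is intuitionistic fuzzy asymptotic regular, and Theorem 2 gives $F_{\epsilon }^{\left( \mu ,\nu \right) }(f)\neq \varnothing$ for each $\epsilon \in \left( 0,1\right)$. I expect the genuine difficulty to be exactly that the active alternative may switch from one index $n$ to the next, so one cannot simply transcribe any single one of Theorems 3--5: the contraction factor must be taken as the common $\lambda$, and one must verify that in each case the self-referential term that arises in the Kannan and Chatterjea expansions (e.g.\ $\mu ( f^{n+1}(x)-f^{n}(x),\cdot )$ reappearing on the right) can be absorbed using only the monotonicity of $\mu (\cdot ,t)$, $\nu (\cdot ,t)$ and the chain $x\preceq f(x)\preceq \cdots$, which is the one place real checking beyond Theorems 3--5 is required.
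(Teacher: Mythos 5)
Your proposal is correct and, in spirit, it is the argument the paper intends; but where the paper's entire proof of this theorem is the single sentence ``The proof is clear from Theorem 4 and Theorem 5,'' you have actually supplied the missing content. The difficulty you isolate at the end is exactly the gap in that one-line citation: a Zamfirescu operator is only required to satisfy one of the alternatives (i)--(iii) \emph{for each pair} $\left( x,y\right)$, so $f$ need not be globally a contraction, a Kannan operator, or a Chatterjea operator, and Theorems 3, 4 and 5 cannot be invoked as black boxes. Your repair --- apply, for each $n$, whichever alternative holds to the single pair $\left( f^{n}(x),f^{n-1}(x)\right)$, extract the one-step estimate $\mu \left( f^{n+1}(x)-f^{n}(x),t\right) \geq \mu \left( f^{n}(x)-f^{n-1}(x),\frac{t}{\lambda }\right)$ (and dually for $\nu$) with the common rate $\lambda =\max \left\{ a,2k,2c\right\} \in \left( 0,1\right)$, using monotonicity of $\mu \left( y,\cdot \right)$ and $\nu \left( y,\cdot \right)$ in $t$ to pass from $a$, $2k$, $2c$ to $\lambda$, and then induct --- is the right way to make the reduction rigorous; and the absorption of the self-referential Kannan/Chatterjea terms via the order chain, the min/max $t$-norms, and the monotonicity of $\mu \left( \cdot ,t\right)$, $\nu \left( \cdot ,t\right)$ goes through verbatim as in the paper's proofs of Theorems 4 and 5, since the order hypotheses of the present theorem are identical to those there. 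One small caution: the displays in your middle paragraph overstate the per-case conclusion --- if alternative (i) (resp.\ (ii), (iii)) holds only for the pair $\left( f^{n}(x),f^{n-1}(x)\right)$, you obtain only the one-step inequality at rate $a$ (resp.\ $2k$, $2c$), not the fully telescoped bound $\mu \left( x-f(x),\frac{t}{a^{n}}\right)$ --- but your final paragraph, with the uniform $\lambda$ and the step-by-step induction, makes clear you intend the correct statement, after which asymptotic regularity and Theorem 2 finish the proof exactly as you say.
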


\qquad \qquad \qquad \qquad \textit{(i)} $\preceq $ is subvector space.

\ \ \ \ \ \ \ \ \ \ \ \ \ \ \ \ \ \ \ \ \ \ \ \ \ \ \ \ \ \ \ \ or\textit{\ }

\qquad \qquad \qquad \qquad \textit{(ii) }$X$ is a complete ordered space.

\textit{If } $\mu \left( .,t\right) $\textit{\ is non-decreasing, }$\nu
\left( .,t\right) $\textit{\ is non-increasing for every }$t\in \left(
0,\infty \right) $ and for every $x\succeq \theta $ ($\theta $ is unit
elemant in vector space $X$), \textit{then every }$\epsilon \in \left(
0,1\right) ,F_{\epsilon }^{\left( \mu ,\nu \right) }(f)\neq \varnothing .$

\begin{proof}
The proof is clear from Theorem 4 and Theorem 5.
\end{proof}

\begin{definition}
Let $X$ be an IFNS. If there exist $a\in \left( 0,1\right) $ and $L\geq 0$
such that%
\begin{eqnarray*}
\mu \left( f(x)-f(y),t\right) &\geq &\mu \left( x-y,\frac{t}{a}\right) \ast
\mu \left( y-f(x),\frac{t}{L}\right) \\
\nu \left( f(x)-f(y),t\right) &\leq &\nu \left( x-y,\frac{t}{a}\right)
\Diamond \nu \left( y-f(x),\frac{t}{L}\right)
\end{eqnarray*}%
for every $x,y\in X$ and $t>0,$ then $f:X\rightarrow X$ is called
intuitionistic fuzzy weak contraction operator.
\end{definition}

\begin{theorem}
Let $X$ be an IFNS, and $f:X\rightarrow X$ be intuitionistic fuzzy weak$\ $%
contraction. Then every $\epsilon \in \left( 0,1\right) ,$ $F_{\epsilon
}^{\left( \mu ,\nu \right) }(f)\neq \varnothing .$
\end{theorem}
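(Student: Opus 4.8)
The plan is to follow exactly the template of Theorems 4 and 5: I would prove that every intuitionistic fuzzy weak contraction is intuitionistic fuzzy asymptotic regular, and then appeal to Theorem 2, which already converts asymptotic regularity into the nonemptiness of $F_{\epsilon}^{\left(\mu,\nu\right)}(f)$ for every $\epsilon\in\left(0,1\right)$. So the entire task reduces to establishing the two limits $\lim_{k\to\infty}\mu\left(f^{k+1}(x)-f^{k}(x),t\right)=1$ and $\lim_{k\to\infty}\nu\left(f^{k+1}(x)-f^{k}(x),t\right)=0$ for an arbitrary $x\in X$ and $t>0$.

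The decisive observation is that the defining inequality of a weak contraction is \emph{asymmetric} in its two variables, so one must assign the roles so that the extra term $y-f(x)$ collapses to the zero vector. Concretely, I would apply the definition with the first argument equal to $f^{k-1}(x)$ and the second equal to $f^{k}(x)$. Then $y-f(x)$ becomes $f^{k}(x)-f\left(f^{k-1}(x)\right)=f^{k}(x)-f^{k}(x)=0$, so properties (iii) and (ix) of the intuitionistic fuzzy norm give $\mu\left(0,\tfrac{t}{L}\right)=1$ and $\nu\left(0,\tfrac{t}{L}\right)=0$. Using the boundary axioms $a\ast 1=a$ and $a\Diamond 0=a$, the mixed term drops out and I am left with the clean one-step recursions
\begin{equation*}
\mu\left(f^{k}(x)-f^{k+1}(x),t\right)\geq\mu\left(f^{k-1}(x)-f^{k}(x),\frac{t}{a}\right)
\end{equation*}
and
\begin{equation*}
\nu\left(f^{k}(x)-f^{k+1}(x),t\right)\leq\nu\left(f^{k-1}(x)-f^{k}(x),\frac{t}{a}\right).
\end{equation*}

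Iterating these $k$ times telescopes them immediately into
\begin{equation*}
\mu\left(f^{k}(x)-f^{k+1}(x),t\right)\geq\mu\left(x-f(x),\frac{t}{a^{k}}\right),\qquad \nu\left(f^{k}(x)-f^{k+1}(x),t\right)\leq\nu\left(x-f(x),\frac{t}{a^{k}}\right).
\end{equation*}
Since $a\in\left(0,1\right)$ forces $\tfrac{t}{a^{k}}\to\infty$ as $k\to\infty$, properties (vii) and (xiii) push the right-hand sides to $1$ and $0$ respectively, which is precisely intuitionistic fuzzy asymptotic regularity; Theorem 2 then finishes the proof. Notice that, in contrast to the Kannan and Chatterjea theorems, this argument needs neither an order relation nor completeness nor any monotonicity hypothesis, which matches the fact that the statement of this theorem imposes none.

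The only place demanding genuine care is the very first step. One must choose the substitution so that the residual term is $y-f(x)=f^{k}(x)-f^{k}(x)=0$; the \emph{opposite} assignment (first argument $f^{k}(x)$, second $f^{k-1}(x)$) leaves behind $\mu\left(f^{k-1}(x)-f^{k+1}(x),\tfrac{t}{L}\right)$, which does not simplify and breaks the telescoping entirely. Thus the direction of the substitution, together with the correct use of $a\ast 1=a$ and $a\Diamond 0=a$ on the right operand, is effectively the whole content of the argument; everything after it is the same routine limit passage used in the earlier theorems.
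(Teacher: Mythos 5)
Your proposal is correct and follows essentially the same route as the paper's own proof: the paper likewise applies the weak-contraction inequality with first argument $f^{k-1}(x)$ and second argument $f^{k}(x)$, so that the residual term becomes $\mu\left(f^{k}(x)-f^{k}(x),\tfrac{t}{L}\right)=1$ (respectively $\nu\left(\cdot\right)=0$), drops it via $a\ast 1=a$ and $a\Diamond 0=a$, telescopes down to $\mu\left(x-f(x),\tfrac{t}{a^{k}}\right)$ and $\nu\left(x-f(x),\tfrac{t}{a^{k}}\right)$, and concludes by properties (vii) and (xiii) that $f$ is asymptotically regular, hence has approximate fixed points. The "decisive observation" you isolate about the direction of the substitution is exactly the mechanism the paper exploits, so there is nothing to add.
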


\begin{proof}
Let $x\in X$ and $\epsilon \in \left( 0,1\right) .$%
\begin{eqnarray*}
\mu \left( f^{k}(x)-f^{k+1}(x),t\right) &=&\mu \left( f\left(
f^{k-1}(x)\right) -f\left( f^{k}(x)\right) ,t\right) \\
&\geq &\mu \left( f^{k-1}(x)-f^{k}(x),\frac{t}{a}\right) \ast \mu \left(
f^{k}(x)-f^{k}(x),\frac{t}{L}\right) \\
&=&\mu \left( f^{k-1}(x)-f^{k}(x),\frac{t}{a}\right) \ast 1=\mu \left(
f^{k-1}(x)-f^{k}(x),\frac{t}{a}\right) \\
&\geq &\mu \left( f^{k-2}(x)-f^{k-1}(x),\frac{t}{a^{2}}\right) \ast \mu
\left( f^{k-1}(x)-f^{k-1}(x),\frac{t}{L}\right) \\
&=&\mu \left( f^{k-2}(x)-f^{k-1}(x),\frac{t}{a^{2}}\right) \ast 1 \\
&\geq &\mu \left( f^{k-2}(x)-f^{k-1}(x),\frac{t}{a^{2}}\right) \\
&\geq &... \\
&=&\mu \left( f^{k-(k-1)}(x)-f^{k-(k-2)}(x),\frac{t}{a^{k-1}}\right) =\mu
\left( f(x)-f^{2}(x),\frac{t}{a^{k-1}}\right) \\
&\geq &\mu \left( x-f(x),\frac{t}{a^{k}}\right) \ast \mu \left( f(x)-f(x),%
\frac{t}{L}\right) \\
&\geq &\mu \left( x-f(x),\frac{t}{a^{k}}\right) \ast 1=\mu \left( x-f(x),%
\frac{t}{a^{k}}\right)
\end{eqnarray*}%
and%
\begin{eqnarray*}
\nu \left( f^{k}(x)-f^{k+1}(x),t\right) &=&\nu \left( f\left(
f^{k-1}(x)\right) -f\left( f^{k}(x)\right) ,t\right) \\
&\leq &\nu \left( f^{k-1}(x)-f^{k}(x),\frac{t}{a}\right) \Diamond \nu \left(
f^{k}(x)-f^{k}(x),\frac{t}{L}\right) \\
&=&\nu \left( f^{k-1}(x)-f^{k}(x),\frac{t}{a}\right) \Diamond 0=\nu \left(
f^{k-1}(x)-f^{k}(x),\frac{t}{a}\right) \\
&\leq &\nu \left( f^{k-2}(x)-f^{k-1}(x),\frac{t}{a^{2}}\right) \Diamond \nu
\left( f^{k-1}(x)-f^{k-1}(x),\frac{t}{L}\right) \\
&=&\nu \left( f^{k-2}(x)-f^{k-1}(x),\frac{t}{a^{2}}\right) \Diamond 0 \\
&\leq &\nu \left( f^{k-2}(x)-f^{k-1}(x),\frac{t}{a^{2}}\right) \\
&\leq &... \\
&=&\nu \left( f^{k-(k-1)}(x)-f^{k-(k-2)}(x),\frac{t}{a^{k-1}}\right) =\nu
\left( f(x)-f^{2}(x),\frac{t}{a^{k-1}}\right) \\
&\leq &\nu \left( x-f(x),\frac{t}{a^{k}}\right) \Diamond \nu \left(
f(x)-f(x),\frac{t}{L}\right) \\
&\leq &\nu \left( x-f(x),\frac{t}{a^{k}}\right) \Diamond 0=\mu \left( x-f(x),%
\frac{t}{a^{k}}\right)
\end{eqnarray*}

Since $\frac{t}{a^{k}}\rightarrow \infty $ for $k\rightarrow \infty ,$ by
means of (vii) and (xiii) properties of intuitionistic fuzzy norm, we see
intuitionistic fuzzy weak contraction map has approximate fixed point
property.
\end{proof}

In the following, we give definition of approximate fixed point property of
a set. Furthermore, we prove that a dense set of intuitionistic fuzzy Banach
space has approximate fixed point property.

\begin{definition}
Let $X$ be IFNS and let $K$ be subset of $X$. Then $K$ is said to have
intuitionistic fuzzy approximate fixed point property (ifafp) if every
intuitionistic fuzzy nonexpansive map $f:K\rightarrow K$ satisfies the
property that $\sup \left\{ \mu \left( x-f\left( x\right) ,t\right) :x\in
K\right\} =1$ and $\inf \left\{ \nu \left( x-f\left( x\right) ,t\right)
:x\in K\right\} =0.$
\end{definition}

\begin{theorem}
Let $X$ be an intuitionistic fuzzy normed space having ifafp, $K$ be dense
subset of $X$. Then $K$ has ifafpp.
\end{theorem}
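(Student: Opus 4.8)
The plan is to deduce the property for the dense subset $K$ from the assumed property of the ambient space $X$ by an extension argument. So fix an intuitionistic fuzzy nonexpansive map $g\colon K\to K$ and a number $t>0$; the goal is to show that $y$ can be made arbitrarily $(\mu,\nu)$-close to $g(y)$ for suitable $y\in K$, i.e. $\sup_{y\in K}\mu(y-g(y),t)=1$ and $\inf_{y\in K}\nu(y-g(y),t)=0$. Since $X$ is an intuitionistic fuzzy \emph{Banach} space (the complete space for which the statement is intended), the natural route is to extend $g$ to a nonexpansive self-map $\bar g\colon X\to X$, apply the hypothesis that $X$ has the property to $\bar g$, and then transfer the resulting approximate fixed points back to $K$ using density.

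First I would construct $\bar g$. Given $x\in X$, density furnishes a sequence $(x_k)\subset K$ with $x_k\overset{(\mu,\nu)}{\to}x$; such a sequence is Cauchy, and nonexpansiveness gives $\mu(g(x_k)-g(x_m),t)\ge\mu(x_k-x_m,t)$ and $\nu(g(x_k)-g(x_m),t)\le\nu(x_k-x_m,t)$ for all $t>0$, so $(g(x_k))$ is Cauchy as well and converges by completeness of $X$. I set $\bar g(x):=\lim_k g(x_k)$. This is independent of the chosen sequence: if also $x_k'\to x$, then $\mu(g(x_k)-g(x_k'),t)\ge\mu(x_k-x_k',t)\to 1$ and the analogous $\nu$-inequality force the two limits to agree. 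For $x,y\in X$, choosing $x_k\to x$ and $y_k\to y$ in $K$ and passing to the limit in $\mu(g(x_k)-g(y_k),t)\ge\mu(x_k-y_k,t)$ and $\nu(g(x_k)-g(y_k),t)\le\nu(x_k-y_k,t)$ — using the limit lemma above, which yields continuity of $(\mu,\nu)$ along convergent sequences — gives $\mu(\bar g(x)-\bar g(y),t)\ge\mu(x-y,t)$ and $\nu(\bar g(x)-\bar g(y),t)\le\nu(x-y,t)$. Hence $\bar g\colon X\to X$ is intuitionistic fuzzy nonexpansive and restricts to $g$ on $K$. This extension step is where the work lies: verifying well-definedness and that nonexpansiveness survives the limit is the main obstacle, and it is precisely here that completeness of $X$ is used.

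Finally I would conclude by pulling back. Since $X$ has the intuitionistic fuzzy approximate fixed point property, applied to the nonexpansive map $\bar g$ it yields $\sup_{x\in X}\mu(x-\bar g(x),t/3)=1$ and $\inf_{x\in X}\nu(x-\bar g(x),t/3)=0$. Fix $\eta>0$ and choose $x\in X$ with $\mu(x-\bar g(x),t/3)>1-\eta$ and $\nu(x-\bar g(x),t/3)<\eta$, and then $y\in K$ close to $x$. Writing $y-g(y)=(y-x)+(x-\bar g(x))+(\bar g(x)-\bar g(y))$ and using $\bar g(y)=g(y)$ together with nonexpansiveness of $\bar g$, the triangle-type inequalities give
\[
\mu(y-g(y),t)\ge\mu(y-x,\tfrac{t}{3})\ast\mu(x-\bar g(x),\tfrac{t}{3})\ast\mu(x-y,\tfrac{t}{3})
\]
and the symmetric estimate with $\Diamond$ for $\nu$. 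Letting $y\to x$ in $K$, so that $\mu(y-x,t/3)\to 1$ and $\nu(y-x,t/3)\to 0$, and using continuity of $\ast$ and $\Diamond$, one obtains $\sup_{y\in K}\mu(y-g(y),t)\ge\mu(x-\bar g(x),t/3)>1-\eta$ and $\inf_{y\in K}\nu(y-g(y),t)\le\nu(x-\bar g(x),t/3)<\eta$. As $\eta>0$ and $t>0$ were arbitrary, $\sup_{y\in K}\mu(y-g(y),t)=1$ and $\inf_{y\in K}\nu(y-g(y),t)=0$, so every nonexpansive self-map of $K$ has the required behaviour and $K$ has the intuitionistic fuzzy approximate fixed point property.
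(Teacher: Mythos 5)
Your proposal is correct and follows essentially the same route as the paper's proof: extend the nonexpansive map from $K$ to all of $X$ via density, apply the approximate fixed point property of $X$ to the extension, and transfer the approximate fixed points back to $K$ through the three-term decomposition and the $\ast$/$\Diamond$ triangle-type inequalities. The only difference is one of care rather than substance: you justify existence and well-definedness of the extension via Cauchy sequences and completeness, a point the paper's proof glosses over (its extension step tacitly assumes $X$ is an intuitionistic fuzzy Banach space, as announced in the paragraph preceding the theorem).
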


\begin{proof}
Let $f:X\rightarrow X$ be an intuitionistic fuzzy nonexpansive mapping.
Firstly we prove that%
\begin{equation*}
\sup \left\{ \mu \left( x-f\left( x\right) ,t\right) :x\in K\right\} =\sup
\left\{ \mu \left( y-f\left( y\right) ,s\right) :y\in X\right\}
\end{equation*}%
and%
\begin{equation*}
\inf \left\{ \nu \left( x-f\left( x\right) ,t\right) :x\in K\right\} =\inf
\nu \left\{ \left( y-f\left( y\right) ,s\right) :y\in X\right\}
\end{equation*}%
for $t,s>0.$ Since $K\subset X,$%
\begin{equation*}
\sup \left\{ \mu \left( y-f\left( y\right) ,s\right) :y\in X\right\} \geq
\sup \left\{ \mu \left( x-f\left( x\right) ,t\right) :x\in K\right\}
\end{equation*}%
and%
\begin{equation*}
\inf \left\{ \nu \left( y-f\left( y\right) ,s\right) :y\in X\right\} \leq
\inf \left\{ \nu \left( x-f\left( x\right) ,t\right) :x\in K\right\} .
\end{equation*}%
Let $y\in X.$ There exists a sequence $\left( y_{k}\right) $ in $K$ such
that $y_{k}\overset{\left( \mu ,\nu \right) }{\rightarrow }y$ \ for all $%
y\in X$ because of $K$ is dense. We know that for each $k\in
\mathbb{N}
$ and $t,s>0,$
\begin{eqnarray*}
\sup \left\{ \mu \left( x-f\left( x\right) ,t\right) :x\in K\right\} &\geq
&\mu \left( y_{k}-f\left( y_{k}\right) ,t\right) \\
&\geq &\mu \left( y_{k}-y+y-f\left( y\right) +f\left( y\right) -f\left(
y_{k}\right) ,s\right) \\
&\geq &\mu \left( y_{k}-y,\frac{t}{3}\right) \ast \mu \left( y-f\left(
y\right) ,\frac{t}{3}\right) \ast \mu \left( y_{k}-f\left( y_{k}\right) ,%
\frac{t}{3}\right)
\end{eqnarray*}%
and
\begin{eqnarray*}
\inf \left\{ \nu \left( x-f\left( x\right) ,t\right) :x\in K\right\} &\leq
&\nu \left( y_{k}-f\left( y_{k}\right) ,t\right) \\
&\leq &\nu \left( y_{k}-y+y-f\left( y\right) +f\left( y\right) -f\left(
y_{k}\right) ,t\right) \\
&\leq &\nu \left( y_{k}-y,\frac{t}{3}\right) \Diamond \nu \left( y-f\left(
y\right) ,\frac{t}{3}\right) \Diamond \nu \left( y_{k}-f\left( y_{k}\right) ,%
\frac{t}{3}\right) .
\end{eqnarray*}%
Since $f$ is intuitionistic fuzzy nonexpansive mapping, it is intuitionistic
fuzzy continuous. Because, if $y_{k}\overset{\left( \mu ,\nu \right) }{%
\rightarrow }y,$ then
\begin{eqnarray*}
\mu \left( f\left( y_{k}\right) -f\left( y\right) ,t\right) &\geq &\mu
\left( y_{k}-y,t\right) \rightarrow 1 \\
\nu \left( f\left( y_{k}\right) -f\left( y\right) ,t\right) &\leq &\nu
\left( y_{k}-y,t\right) \rightarrow 0.
\end{eqnarray*}

So $f\left( y_{k}\right) \overset{\left( \mu ,\nu \right) }{\rightarrow }%
f\left( y\right) $ when $y_{k}\overset{\left( \mu ,\nu \right) }{\rightarrow
}y$. If we take limit above inequalities, we get%
\begin{equation*}
\sup \left\{ \mu \left( x-f\left( x\right) ,t\right) :x\in K\right\} \geq
\mu \left( y-f\left( y\right) ,\frac{s}{3}\right)
\end{equation*}%
and%
\begin{equation*}
\inf \left\{ \nu \left( x-f\left( x\right) ,t\right) :x\in K\right\} \leq
\nu \left( y-f\left( y\right) ,\frac{s}{3}\right)
\end{equation*}%
for all $y\in X$ \ and $t,s>0$. Thus, if we take $\frac{s}{3}=s^{\prime },$
\begin{equation*}
\sup \left\{ \mu \left( x-f\left( x\right) ,t\right) :x\in K\right\} \geq
\sup \left\{ \mu \left( y-f\left( y\right) ,s^{\prime }\right) :y\in
X\right\}
\end{equation*}%
and
\begin{equation*}
\inf \left\{ \nu \left( x-f\left( x\right) ,t\right) :x\in K\right\} \geq
\inf \left\{ \nu \left( y-f\left( y\right) ,s^{\prime }\right) :y\in
X\right\} .
\end{equation*}%
Therefore our claim is proved. Now consider any intuitionistic fuzzy
nonexpansive mapping $f_{K}:K\rightarrow K.$ Since $K$ is dense, there
exists a sequence $\left( y_{k}\right) $ in$\ K$ such that $y_{k}\overset{%
\left( \mu ,\nu \right) }{\rightarrow }y$ for any $y\in X.$ Since an
intuitionistic fuzzy nonexpansive mapping is continuous, $f_{K}:K\rightarrow
K$ is intuitionistic fuzzy continuous and it can be extending by defining $%
f\left( x\right) =\lim \left( \mu ,\nu \right) -f\left( x_{k}\right) $ on $%
X. $ Hence we can consider $f$ as an intuitionistic fuzzy nonexpansive
mapping on $X.$ Because, using Lemma 2, we get

\begin{equation*}
\mu \left( f\left( x\right) -f\left( y\right) ,t\right) =\underset{%
k\rightarrow \infty }{\lim }\sup \mu \left( f\left( x_{k}\right) -f\left(
y_{k}\right) \right) ,t\geq \underset{k\rightarrow \infty }{\lim }\sup \mu
\left( x_{k}-y_{k},t\right) =\mu \left( x,y,t\right) \text{ }
\end{equation*}%
and $.$%
\begin{equation*}
\nu \left( f\left( x\right) -f\left( y\right) ,t\right) =\underset{%
k\rightarrow \infty }{\lim \inf }\nu \left( f\left( x_{k}\right) -f\left(
y_{k}\right) \right) \leq \underset{k\rightarrow \infty }{\lim }\inf \nu
\left( x_{k}-y_{k},t\right) =\nu \left( x,y,t\right)
\end{equation*}%
for all $x,y\in X$ and $t>0.$ Then, since $X$ has ifafpp
\begin{equation*}
\sup \left\{ \mu \left( x-f\left( x\right) ,t\right) :x\in K\right\} =\sup
\left\{ \mu \left( y-f\left( y\right) ,t\right) :y\in X\right\} =1
\end{equation*}%
and%
\begin{equation*}
\inf \left\{ \nu \left( x-f\left( x\right) ,t\right) :x\in K\right\} =\inf
\nu \left\{ \left( y-f\left( y\right) ,t\right) :y\in X\right\} =0.
\end{equation*}%
That is, for given any intuitionistic fuzzy nonexpansive mapping $f$ on $K$
we have $\sup \left\{ \mu \left( x-f\left( x\right) ,t\right) :x\in
K\right\} =1$ and $\inf \left\{ \nu \left( x-f\left( x\right) ,t\right)
:x\in K\right\} =0$ and $K$ has (ifafpp)$.$
\end{proof}

\begin{acknowledgement}
This work is supported by Yildiz Technical University Scientific Research
Projects Coordination Unit under the project number BAPK 2012-07-03-DOP03.
\end{acknowledgement}

\end{document}